\numberwithin{equation}{section}
\newtheorem{theorem}{Theorem}[section]
\newtheorem{definition}{Definition}[section]
\newtheorem{lemma}[theorem]{Lemma}
\newtheorem{proposition}[theorem]{Proposition}
\newtheorem{corollary}[theorem]{Corollary}
\newtheorem{conjecture}[theorem]{Conjecture}
\numberwithin{equation}{section}
      \def\@setcopyright{}
      \def\serieslogo@{}
\begin{document}

\title{Stability in Respect of Chromatic Completion of Graphs} 
\author{E.G. Mphako-Banda$^*$ and J. Kok$^{\dag}$} 
\address{$^*$School of Mathematical Sciences\\
University of Witwatersrand\\
Johannesburg, South Africa\\
$^{\dag}$Centre for Studies in Discrete Mathematics\\
Vidya Academy of Science $\&$ Technology\\
Thrissur, India}
\email{$^*$~eunice.mphako-banda@wits.ac.za\\
$^{\dag}$~kokkiek2@tshwane.gov.za}

\keywords{Chromatic completion number, chromatic completion graph, chromatic completion edge, bad edge, stability}

\subjclass[2010]{05C15, 05C38, 05C75, 05C85} 

\begin{abstract}
In an improper colouring an edge $uv$ for which, $c(u)=c(v)$ is called a \emph{bad edge}.  The notion of the \emph{chromatic completion number} of a graph $G$ denoted by $\zeta(G),$ is the maximum number of edges over all chromatic colourings that can be added to $G$ without adding a bad edge. We introduce stability of a graph in respect of chromatic completion. We prove that the set of chromatic completion edges denoted by  $E_\chi(G),$ which corresponds to $\zeta(G)$ is unique if and only if $G$ is stable in respect of chromatic completion. Thereafter, chromatic completion and stability is discussed in respect of Johan colouring. The difficulty of studying chromatic completion with regards to graph operations is shown by presenting results for two elementary graph operations.
\end{abstract}
\maketitle
\section{Introduction}
For general notation and concepts in graphs see \cite{1,2,13}.  Unless stated otherwise, all graphs will be finite and simple, connected graphs with at least one edge. The set of vertices and the set of edges of a graph $G$ are denoted by, $V(G),$ $E(G),$ respectively. The number of vertices is denoted by, $\nu(G)$ and the number of edges of $G$ is denoted by, $\varepsilon(G).$ The degree of a vertex $v \in V(G)$ is denoted $d_G(v)$ or when the context is clear, simply as $d(v).$ The minimum and maximum degree $\delta(G)$ and $\Delta(G)$ respectively, have the conventional meaning. When the context is clear we shall abbreviate to $\delta$ and $\Delta,$ respectively. Recall that the set of vertices adjacent to a vertex $u\in V(G)$ is the open neighbourhood $N(u)$ of $u$ and the closed neighbourhood of $u$ is, $N[u] = N(u)\cup \{u\}.$

For a set of distinct colours $\mathcal{C}= \{c_1,c_2,c_3,\dots,c_\ell\}$ a vertex colouring of a graph $G$ is an assignment $\varphi:V(G) \mapsto \mathcal{C}.$ A vertex colouring is said to be a \textit{proper vertex colouring} of a graph $G$ if no two distinct adjacent vertices have the same colour. The cardinality of a minimum set of distinct colours in a proper vertex colouring of $G$ is called the \textit{chromatic number} of $G$ and is denoted by, $\chi(G).$ We call such a colouring a $\chi$-colouring or a \textit{chromatic colouring} of $G.$ A chromatic colouring of $G$ is denoted by $\varphi_\chi(G).$ Generally a graph $G$ of order $n$ is $k$-colourable for $\chi(G) \leq k.$ Unless mentioned otherwise, a set of colours will mean a set of distinct colours.

Generally the set, $c(V(G)) \subseteq \mathcal{C}.$ A set $\{c_i \in \mathcal{C}: c(v)=c_i$ for at least one $v\in V(G)\}$ is called a colour class of the colouring of $G.$ If $\mathcal{C}$ is the chromatic set it can be agreed that $c(G)$ means set $c(V(G))$ hence, $c(G) \Rightarrow \mathcal{C}$ and $|c(G)| = |\mathcal{C}|.$ For the set of vertices $X\subseteq V(G),$ the induced subgraph induced by $X$ is denoted by, $\langle X\rangle.$ The colouring of $\langle X\rangle$ permitted by $\varphi:V(G) \mapsto \mathcal{C}$ is denoted by, $c(\langle X\rangle).$ The number of times a colour $c_i$ is allocated to vertices of a graph $G$ is denoted by $\theta_G(c_i)$ or if the context is clear simply, $\theta(c_i).$

Index labeling the elements of a graph such as the vertices say, $v_1,v_2,v_3,\\ \dots,v_n$ or written as, $v_i$, $i = 1,2,3,\dots,n,$ is called minimum parameter indexing. Similarly, a \textit{minimum parameter colouring} of a graph $G$ is a proper colouring of $G$ which consists of the colours $c_i;\ 1\le i\le \ell.$ All graphs will have index labeled vertices. Only if the context is clear will general references to vertices such as $u \in V(G)$ or $v,w \in V(G)$ be used.

This paper is organised as follows. Section~\ref{s2} recalls important results of the \emph{chromatic completion number} of a graph $G$ from \cite{10}. The concept of stability in respect of chromatic completion of graphs is introduced. A uniqueness theorem in respect of the set of chromatic completion edges denoted by, $E_\chi(G)$ is also presented. Section~\ref{s3} addresses chromatic completion and stability of a graph in respect of Johan colourings. Section~\ref{s4} concludes this paper by discussing results for two elementary graph operations. 
 
\section{Stability}
\label{s2}
In an improper colouring an edge $uv$ for which, $c(u)=c(v)$ is called a \emph{bad edge}. See \cite{9} for an introduction to $k$-defect colouring and corresponding polynomials. For a colour set $\mathcal{C},$ $|\mathcal{C}| \geq \chi (G)$ a graph $G$ can always be coloured properly hence, such that no bad edge results. Also, for a set of colours $\mathcal{C},$ $|\mathcal{C}| = \chi (G) \geq 2$ a graph $G$ of order $n$ with corresponding chromatic polynomial $\mathcal{P}_G(\lambda, n),$ can always be coloured properly in $\mathcal{P}_G(\chi, n)$ distinct ways.  The notion of the \emph{chromatic completion number} of a graph $G$ denoted by, $\zeta(G)$ is the maximum number of edges over all chromatic colourings that can be added to $G$ without adding a bad edge \cite{10}. The resultant graph $G_\zeta$ is called a \emph{chromatic completion graph} of $G.$ The additional edges are called \emph{chromatic completion edges}. It is trivially true that $G\subseteq G_\zeta.$ Clearly for a complete graph $K_n,$ $\zeta(K_n)=0.$ In fact for any complete $\ell$-partite graph $H=K_{n_1,n_2,n_3,\dots,n_\ell},$ $\zeta(H)=0.$ Hereafter, all graphs will not be $\ell$-partite complete. For graphs $G,$ $H$ both of order $n$ with $\varepsilon(G)\geq \varepsilon(H)$ no relation between $\zeta(G)$ and $\zeta(H)$ could be found. We state without proof the following six important results from \cite{10}, which form a basis of this paper.
\begin{theorem}
A graph $G$ of order $n$ is not complete, if and only if $G_\zeta$ is not complete.
\label{thm2.1}
\end{theorem}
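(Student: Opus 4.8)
The plan is to prove each implication via its contrapositive, the key observation being that the $\chi(G)$-colouring used to build $G_\zeta$ remains a \emph{proper} colouring of $G_\zeta$. Indeed, the chromatic completion edges are, by definition, added so that no bad edge arises, i.e.\ every added edge joins two vertices of different colour; since $G$ itself was properly coloured with $\chi(G)$ colours, the same colour assignment is a proper colouring of $G_\zeta$. Hence $\chi(G_\zeta)\le \chi(G)$, and because $G\subseteq G_\zeta$ we also have $\chi(G_\zeta)\ge\chi(G)$, so $\chi(G_\zeta)=\chi(G)$. The only other ingredient is the standard fact that a graph $H$ on $n$ vertices satisfies $\chi(H)=n$ if and only if $H\cong K_n$ (necessity: if $H$ is not complete it has two non-adjacent vertices which may receive a common colour, so $n-1$ colours suffice).

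For the forward direction I would argue the contrapositive: suppose $G_\zeta$ is complete, so $G_\zeta=K_n$ and $\chi(G_\zeta)=n$. By the equality above, $\chi(G)=n$, and by the characterisation of $K_n$ this forces $G=K_n$; thus $G$ is complete. For the reverse direction, again by contrapositive: if $G=K_n$ then since $G\subseteq G_\zeta\subseteq K_n$ we get $G_\zeta=K_n$, so $G_\zeta$ is complete. (Equivalently, one may invoke $\zeta(K_n)=0$ noted in the text.) Combining the two contrapositives yields the stated equivalence.

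I do not anticipate a genuine obstacle here; the one point that needs care is justifying that the colour classes of the chromatic colouring of $G$ are preserved under chromatic completion, so that $\chi(G_\zeta)=\chi(G)$ really does hold — this is exactly where the ``no bad edge'' condition is used, and it is what prevents $G_\zeta$ from being $K_n$ unless already $\chi(G)=n$. Everything else is a short deduction from the $\chi(H)=n\Leftrightarrow H\cong K_n$ characterisation together with the monotonicity $\chi(G)\le\chi(G_\zeta)$ coming from $G\subseteq G_\zeta$.
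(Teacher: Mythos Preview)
The paper does not actually prove Theorem~\ref{thm2.1}; it is one of the six results explicitly ``state[d] without proof'' and imported from reference~\cite{10}. So there is no in-paper argument to compare against.

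Your proof is correct. The observation that the chromatic colouring used to construct $G_\zeta$ remains a proper colouring of $G_\zeta$ (since only edges between distinct colour classes are added) gives $\chi(G_\zeta)\le\chi(G)$, and $G\subseteq G_\zeta$ gives the reverse inequality, so $\chi(G_\zeta)=\chi(G)$. Combined with the elementary characterisation $\chi(H)=n\iff H\cong K_n$, both contrapositives follow immediately. This is precisely the natural argument one would expect; there is no gap.
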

\begin{lemma}
\label{lem2.2}
For a chromatic colouring $\varphi:V(G)\mapsto \mathcal{C}$ a pseudo completion graph, $H(\varphi)= K_{n_1,n_2,n_3,\dots,n_\chi}$ exists such that, $$\varepsilon(H(\varphi))-\varepsilon(G) =\sum\limits_{i=1}^{\chi-1}\theta_G(c_i)\theta_G(c_j)_{(j=i+1,i+2,i+3,\dots,\chi)}-\varepsilon(G) \leq \zeta(G).$$
\end{lemma}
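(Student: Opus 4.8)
The plan is to exhibit, for the given chromatic colouring $\varphi$, one concrete way of adding edges to $G$ without creating a bad edge, and then to invoke the maximality built into the definition of $\zeta(G)$ to obtain the stated inequality. Write $V_i = \{v \in V(G) : \varphi(v) = c_i\}$ for the colour classes, $1 \le i \le \chi$; since $\varphi$ is a $\chi$-colouring each $V_i$ is non-empty and $|V_i| = \theta_G(c_i) =: n_i$, and the sets $V_1,\dots,V_\chi$ partition $V(G)$.

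First I would observe that, because $\varphi$ is proper, no edge of $G$ has both ends in a single class $V_i$; equivalently every edge of $G$ joins two vertices lying in distinct classes. Hence $G$ is a spanning subgraph of the complete multipartite graph $H(\varphi) := K_{n_1,n_2,n_3,\dots,n_\chi}$ whose parts are exactly $V_1,V_2,\dots,V_\chi$. This $H(\varphi)$ is the pseudo completion graph attached to $\varphi$; note it need not be a chromatic completion graph $G_\zeta$, which is precisely why the word ``pseudo'' is appropriate. (If $G$ already equals this complete multipartite graph there is nothing to add, but by the standing assumption that $G$ is not $\ell$-partite complete this degenerate case does not arise, and in any event it is harmless.)

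Next I would add to $G$ precisely the edge set $E(H(\varphi))\setminus E(G)$. Every such edge joins two vertices in different colour classes, so it is not a bad edge for $\varphi$; thus $G$ augmented by these edges, still coloured by $\varphi$, is a chromatic colouring of a graph containing $G$ with no bad edge. Since $\zeta(G)$ is the maximum number of edges that can be added to $G$ \emph{over all} chromatic colourings without creating a bad edge, this single choice of $\varphi$ yields the lower bound
$$\zeta(G) \ \ge\ \varepsilon(H(\varphi)) - \varepsilon(G).$$

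Finally I would compute $\varepsilon(H(\varphi))$. The complete multipartite graph $K_{n_1,\dots,n_\chi}$ has an edge between every pair of vertices in distinct parts, so
$$\varepsilon(H(\varphi)) \ =\ \sum_{1\le i<j\le \chi} n_i n_j \ =\ \sum_{i=1}^{\chi-1}\theta_G(c_i)\,\theta_G(c_j)_{(j=i+1,i+2,i+3,\dots,\chi)},$$
which is exactly the middle expression in the statement; substituting into the displayed inequality completes the argument. I do not expect a genuine obstacle here: the only points needing care are that all $\chi$ colour classes are non-empty (so that $H(\varphi)$ really is a $\chi$-partite graph with the claimed part sizes), that $G\subseteq H(\varphi)$ follows purely from properness of $\varphi$, and that the quantifier ``over all chromatic colourings'' in the definition of $\zeta(G)$ is what turns a computation for one fixed $\varphi$ into a valid lower bound rather than an equality.
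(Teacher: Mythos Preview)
Your argument is correct and is exactly the natural proof: form the complete $\chi$-partite graph on the colour classes of $\varphi$, observe that $G$ sits inside it because $\varphi$ is proper, and count the missing edges to get a lower bound on $\zeta(G)$ from the maximality in its definition. Note, however, that the paper does not actually prove Lemma~\ref{lem2.2}; it is one of six results explicitly ``state[d] without proof'' and imported from \cite{10}, so there is no in-paper proof to compare against. Your write-up would serve perfectly well as the omitted argument.
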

A main result in the form of a corollary is a direct consequence of Lemma 2.2.
\begin{corollary}
\label{col2.3}
Let  $G$ be a graph. Then 
\begin{eqnarray*}
\zeta(G) & =& max(\varepsilon(H(\varphi)) -\varepsilon(G) \text{ over all} \ \varphi:V(G)\mapsto \mathcal{C}.
\end{eqnarray*}
\end{corollary}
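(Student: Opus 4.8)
The plan is to derive Corollary~\ref{col2.3} directly from Lemma~\ref{lem2.2} by combining the lower bound it provides with a matching upper bound, the latter coming from the definition of $\zeta(G)$ itself. Lemma~\ref{lem2.2} already tells us that for every chromatic colouring $\varphi:V(G)\mapsto\mathcal{C}$ the associated pseudo completion graph $H(\varphi)=K_{n_1,n_2,\dots,n_\chi}$ satisfies $\varepsilon(H(\varphi))-\varepsilon(G)\le\zeta(G)$. Taking the maximum over all such $\varphi$ immediately gives $\max_\varphi\big(\varepsilon(H(\varphi))-\varepsilon(G)\big)\le\zeta(G)$, so one inequality is essentially free.

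For the reverse inequality I would argue that any chromatic completion graph $G_\zeta$ realising $\zeta(G)$ is itself a complete $\chi$-partite graph on the colour classes of the chromatic colouring used to build it. The key observation is this: if $\varphi$ is the chromatic colouring witnessing $\zeta(G)$ and some pair of vertices $u,v$ with $\varphi(u)\neq\varphi(v)$ are non-adjacent in $G_\zeta$, then the edge $uv$ could be added without creating a bad edge (since $u,v$ receive different colours), contradicting the maximality of $\zeta(G)$. Hence $G_\zeta$ contains every edge between distinct colour classes, i.e. $G_\zeta$ is exactly the complete $\chi$-partite graph on the classes $V_1,\dots,V_\chi$ of $\varphi$, where $|V_i|=\theta_G(c_i)$. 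Therefore $G_\zeta=H(\varphi)$ for that particular $\varphi$, which yields $\zeta(G)=\varepsilon(G_\zeta)-\varepsilon(G)=\varepsilon(H(\varphi))-\varepsilon(G)\le\max_\psi\big(\varepsilon(H(\psi))-\varepsilon(G)\big)$.

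Putting the two inequalities together gives the claimed equality. The only subtle point — the step I would treat most carefully — is justifying that the completion edges added to realise $\zeta(G)$ are all \emph{between} colour classes of a single fixed chromatic colouring, rather than edges that might, a priori, be recoloured along the way. Here one uses the definition precisely: a chromatic completion is taken "over all chromatic colourings", so we first fix the optimal colouring $\varphi$ and then observe that the optimal set of addable edges under $\varphi$ is precisely the non-edges of $G$ joining differently coloured vertices, whose count is $\sum_{i<j}\theta_G(c_i)\theta_G(c_j)-\varepsilon(G)=\varepsilon(H(\varphi))-\varepsilon(G)$. Everything else is bookkeeping with the pseudo completion graph already supplied by Lemma~\ref{lem2.2}, so the proof is short.
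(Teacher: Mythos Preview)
Your argument is correct and is precisely the natural expansion of what the paper means by calling Corollary~\ref{col2.3} ``a direct consequence of Lemma~\ref{lem2.2}'' (the paper gives no further proof). Both directions---the upper bound via Lemma~\ref{lem2.2} and the attainment via the optimal colouring $\varphi$ whose completion graph $G_\zeta$ must equal $H(\varphi)$ by maximality---are handled cleanly, and your remark that the definition fixes the colouring first is exactly the right way to dispel the only possible worry.
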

\begin{theorem}
\label{thm2.4}
Let  $G$ be a graph. Then $\zeta(G)\leq \varepsilon(\overline{G}).$
\end{theorem}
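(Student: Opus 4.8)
The plan is to read the bound off directly from the definition of $\zeta(G)$ together with Corollary~\ref{col2.3}. Write $n=\nu(G)$ and fix a chromatic colouring $\varphi:V(G)\mapsto\mathcal{C}$ that attains the maximum in Corollary~\ref{col2.3}; let $E_\chi(G)$ be the associated set of chromatic completion edges, so that $\zeta(G)=|E_\chi(G)|$ and $G_\zeta=G\cup E_\chi(G)$. The one point that really has to be observed is that every chromatic completion edge is a non-edge of $G$, i.e. an edge of $\overline{G}$; everything else is bookkeeping.

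First I would note that, since all graphs considered are simple, an edge added to $G$ cannot already lie in $E(G)$, and hence each edge of $E_\chi(G)$ belongs to $E(\overline{G})$, that is $E_\chi(G)\subseteq E(\overline{G})$. (In fact a little more is true: for an edge $uv$ to be addable without creating a bad edge one needs $\varphi(u)\neq\varphi(v)$, so $E_\chi(G)$ even sits inside $\{uv\in E(\overline{G}):\varphi(u)\neq\varphi(v)\}$, but the weaker containment already suffices.) Therefore $\zeta(G)=|E_\chi(G)|\le|E(\overline{G})|=\varepsilon(\overline{G})$.

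Equivalently, and perhaps more in the spirit of Lemma~\ref{lem2.2} and Corollary~\ref{col2.3}, one can argue through the pseudo completion graph $H(\varphi)=K_{n_1,n_2,\dots,n_\chi}$: it has vertex set $V(G)$, and since each colour class is independent in $G$ it contains $G$, whence $\varepsilon(H(\varphi))\le\varepsilon(K_n)=\binom{n}{2}$. Choosing $\varphi$ to realise the maximum in Corollary~\ref{col2.3} then gives
$$\zeta(G)=\varepsilon(H(\varphi))-\varepsilon(G)\le\binom{n}{2}-\varepsilon(G)=\varepsilon(\overline{G}),$$
using $\varepsilon(G)+\varepsilon(\overline{G})=\binom{n}{2}$.

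I do not expect a genuine obstacle here: the entire content is the elementary containment $G\subseteq G_\zeta\subseteq K_n$, after which the inequality is immediate. The only place to be slightly careful is to justify that the completion edges are counted without multiplicity and lie strictly outside $E(G)$, so that they are genuinely edges of $\overline{G}$; this is forced by simplicity of the graphs. If desired one could append a remark on equality: $\zeta(G)=\varepsilon(\overline{G})$ would force every non-edge of $G$ to join vertices of different colour, i.e. each colour class to be simultaneously independent and a clique, hence a single vertex, i.e. $G=K_n$; consistently with this, Theorem~\ref{thm2.1} shows that for non-complete $G$ the graph $G_\zeta$ is non-complete, so the inequality is strict in the cases of interest.
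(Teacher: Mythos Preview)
Your argument is correct. Note, however, that the paper does not actually prove Theorem~\ref{thm2.4}: it is one of six results quoted without proof from \cite{10}, so there is no in-paper proof to compare against. Your first derivation---observing that by simplicity every chromatic completion edge lies in $E(\overline{G})$, hence $\zeta(G)=|E_\chi(G)|\le \varepsilon(\overline{G})$---is the natural one-line argument, and your alternative via $H(\varphi)\subseteq K_n$ and Corollary~\ref{col2.3} is an equally valid rephrasing. Your equality discussion also matches the remark the paper makes immediately after the theorem (equality iff $G$ is complete); the paper phrases the strict-inequality condition as ``if for all chromatic colourings some colour class has $\theta(c_i)\ge 2$,'' which is exactly your observation that a colour class of size at least two forces a non-edge inside a monochromatic pair.
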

If for all chromatic colourings of $G$ we have that, $\theta(c_i)\geq 2$ for some $c_i$ then, $\zeta(G) < \varepsilon(\overline{G})$. Hence, equality holds if and only if a graph $G$ of order $n$ is complete.
\begin{theorem}[(Lucky's~Theorem)]
\label{thm2.5}
For a positive integer $n \geq 2$ and $2\leq p \leq n$ let integers, $1\leq a_1,a_2,a_3,\dots,a_{p-r}, a'_1,a'_2,a'_3,\dots,a'_r \leq n-1$ be such that $n=\sum\limits_{i=1}^{p-r}a_i + \sum\limits_{j=1}^{r}a'_j$ then, the $\ell$-completion sum-product $\mathcal{L} = max\{\sum\limits_{i=1}^{p-r-1}\prod\limits_{k=i+1}^{p-r}a_ia_k + \sum\limits_{i=1}^{p-r}\prod\limits_{j=1}^{r}a_ia'_j + \sum\limits_{j=1}^{r-1}\prod\limits_{k=j+1}^{r}a'_ja'_k\}$ over all possible, $n=\sum\limits_{i=1}^{p-r}a_i + \sum\limits_{j=1}^{r}a'_j.$
\end{theorem}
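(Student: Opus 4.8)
The plan is to recognise the quantity being maximised as the edge count of a complete $p$-partite graph on $n$ vertices and then to invoke the classical ``balanced partition'' optimisation. First I would rewrite the triple sum in the statement in the notation of Lemma~\ref{lem2.2}: let $n_1,n_2,\dots ,n_p$ denote the $p$ part sizes $a_1,\dots ,a_{p-r},a'_1,\dots ,a'_r$, and read each ``$\prod_{k=i+1}^{m}a_ia_k$'' as the index run $\sum_{k=i+1}^{m}a_ia_k$ (in accordance with the shorthand $\theta_G(c_i)\theta_G(c_j)_{(j=i+1,\dots)}$ of Lemma~\ref{lem2.2} and with the name \emph{sum}-product). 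The three blocks are then exactly the unprimed--unprimed, unprimed--primed and primed--primed pairs, so the expression equals $\sum_{1\le i<j\le p}n_in_j=\varepsilon\big(K_{n_1,n_2,\dots ,n_p}\big)$. Using $\sum_i n_i=n$,
\[
\sum_{1\le i<j\le p} n_i n_j=\tfrac12\Big(n^{2}-\sum_{i=1}^{p}n_i^{2}\Big),
\]
so maximising $\mathcal L$ over the admissible compositions $n=\sum a_i+\sum a'_j$ is equivalent to \emph{minimising} $\sum_{i=1}^{p}n_i^{2}$ over all ways of writing $n$ as an ordered sum of exactly $p$ integers in $\{1,2,\dots ,n-1\}$.

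Next I would run the standard smoothing (majorisation) argument for the strictly convex map $x\mapsto x^{2}$. If some admissible composition has two parts with $n_a\ge n_b+2$, replace the pair $(n_a,n_b)$ by $(n_a-1,n_b+1)$: the number of parts and their sum are unchanged, each new part still lies in $\{1,\dots ,n-1\}$, and $\sum_i n_i^{2}$ drops by $2(n_a-n_b-1)>0$. Since $\sum_i n_i^{2}$ is a positive integer it cannot decrease indefinitely, so a minimiser exists in which any two parts differ by at most $1$; writing $n=pq+s$ with $0\le s<p$, this forces exactly $s$ parts equal to $q+1$ and $p-s$ parts equal to $q$ (note $q=\lfloor n/p\rfloor\ge 1$ because $p\le n$). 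Substituting back gives the explicit value
\[
\mathcal L=\binom{s}{2}(q+1)^{2}+s(p-s)q(q+1)+\binom{p-s}{2}q^{2}
=\tfrac12\big(n^{2}-s(q+1)^{2}-(p-s)q^{2}\big).
\]

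Finally I would record that the splitting of the $p$ indices into $p-r$ unprimed and $r$ primed ones is immaterial to the value: the objective is a symmetric function of $n_1,\dots ,n_p$ and the constraints on the $a_i$ and on the $a'_j$ are identical, so $\mathcal L=\mathcal L(n,p)$ only. I expect the genuine obstacle here to be interpretive rather than computational: one must first pin down that the intended assertion is ``the near-equal composition is optimal'', then confirm that ``exactly $p$ parts, each between $1$ and $n-1$'' is precisely the feasible set and that the boundary constraint $a_i\le n-1$ is inactive throughout the smoothing and at the optimum — which reduces to $\lceil n/p\rceil\le\lceil n/2\rceil\le n-1$ for all $n\ge 2$. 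Once that is settled, the remainder is the textbook convexity/exchange estimate that already underlies Lemma~\ref{lem2.2} and Theorem~\ref{thm2.4}.
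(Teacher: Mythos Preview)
The paper does not actually prove Theorem~\ref{thm2.5}: it is one of the six results explicitly ``state[d] without proof'' from \cite{10}, so there is no in-paper argument to compare your proposal against. That said, your reading of the intended assertion --- that the maximum $\mathcal{L}$ is attained by the near-equal composition into $p$ parts --- is confirmed by the very next statement, Lemma~\ref{lem2.6}, which the authors describe as following from Theorem~\ref{thm2.5} and which prescribes exactly the $\lfloor m/t\rfloor$-plus-remainder allocation.

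On the mathematics itself: your rewriting of the triple block as $\sum_{1\le i<j\le p}n_in_j=\varepsilon(K_{n_1,\dots,n_p})$, the passage to minimising $\sum n_i^2$ via $\tfrac12(n^2-\sum n_i^2)$, and the smoothing step $(n_a,n_b)\mapsto(n_a-1,n_b+1)$ when $n_a\ge n_b+2$ are all correct and constitute the standard proof of this fact. Your check that the upper bound $a_i\le n-1$ is inactive at the optimum and throughout the exchange is the only point that needs any care, and you have handled it. Since the paper offers no proof, your argument would serve as a self-contained justification; it is also more explicit than what one typically finds, since you record the closed form for $\mathcal{L}$ in terms of $q=\lfloor n/p\rfloor$ and $s=n-pq$.
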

From Theorem 2.5 the next lemma followed which prescribes a particular colouring convention.
\begin{lemma}
\label{lem2.6}
If a subset of $m$ vertices say, $X \subseteq V(G)$ can be chromatically coloured by $t$ distinct colours then allocate colours as follows:
\begin{enumerate}[(a)]
\item For $t$ vertex subsets each of cardinality $s= \lfloor \frac{m}{t}\rfloor$ allocate a distinct colour followed by:
\item  Colour one additional vertex (from the $r\geq 0$ which are uncoloured), each in a distinct colour,
\end{enumerate}
if the graph structure permits such colour allocation.
This chromatic colouring permits the maximum number of chromatic completion edges between the vertices in $X$ amongst all possible chromatic colourings of $X$.
\end{lemma}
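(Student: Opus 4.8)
The plan is to turn the statement into the elementary fact that a balanced partition of an integer maximises the corresponding complete‑multipartite edge count, which is exactly the optimisation recorded in Lucky's Theorem (Theorem~\ref{thm2.5}).

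First I would fix any chromatic colouring whose restriction to $X$ uses the $t$ available colours, and denote by $n_1,n_2,\dots,n_t$ the sizes of the $t$ colour classes inside $X$, so that $\sum_{i=1}^{t}n_i=m$. Because the colouring is proper, no edge of $\langle X\rangle$ joins two equally coloured vertices; hence every edge of $\langle X\rangle$ already runs between two different classes. A pair $u,v\in X$ can be turned into a chromatic completion edge exactly when $c(u)\neq c(v)$ and $uv\notin E(G)$, and all such edges may be inserted simultaneously without ever creating a bad edge. Therefore the number of chromatic completion edges with both ends in $X$ permitted by this colouring equals
\[
\sum_{1\le i<j\le t} n_i n_j \;-\; \varepsilon(\langle X\rangle).
\]

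Next, since $\varepsilon(\langle X\rangle)$ is the same for every colouring of $X$, maximising the count above over all chromatic colourings of $X$ is the same as maximising $\sum_{i<j}n_in_j$ over all splittings $m=n_1+\dots+n_t$. Using $\sum_{i<j}n_in_j=\tfrac12\big(m^2-\sum_{i=1}^{t}n_i^2\big)$, this amounts to \emph{minimising} $\sum_i n_i^2$, and by convexity of $x\mapsto x^2$ — equivalently, by the maximisation defining $\mathcal{L}$ in Theorem~\ref{thm2.5} (with the total taken to be $m$ and $p=t$) — the optimum is attained precisely when the class sizes are as equal as possible, i.e. each $n_i\in\{\lfloor m/t\rfloor,\lceil m/t\rceil\}$, with exactly $r:=m-t\lfloor m/t\rfloor$ classes of size $\lceil m/t\rceil$. (If colourings of $X$ with fewer than $t$ colours are also admitted, splitting any class only increases $\tfrac12(m^2-\sum n_i^2)$, so using all $t$ colours is never worse.) Finally I would verify that the prescription (a)–(b) realises exactly this balanced partition: step (a) creates $t$ classes of size $s=\lfloor m/t\rfloor$ consuming $ts$ vertices of $X$, and step (b) places the remaining $r=m-ts$ vertices one per class into $r$ distinct classes, giving $r$ classes of size $s+1=\lceil m/t\rceil$ and $t-r$ of size $s$. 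Hence, whenever the graph structure permits this allocation as part of a chromatic colouring, it attains the maximum of $\sum_{i<j}n_in_j$, and therefore the maximum number of chromatic completion edges between the vertices of $X$, as claimed.

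The step I expect to be the main obstacle is the first one: making it airtight that the number of completion edges internal to $X$ depends only on the partition of $X$ into colour classes together with the fixed quantity $\varepsilon(\langle X\rangle)$ — in particular that it is independent of how $V(G)\setminus X$ is coloured — and that all cross‑class non‑edges can indeed be added at once without producing a bad edge. Once that bookkeeping is settled, the remainder is the standard ``balanced parts minimise the sum of squares'' argument already embodied in Theorem~\ref{thm2.5}.
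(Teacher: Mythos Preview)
Your proposal is correct and follows the route the paper indicates: the paper states Lemma~\ref{lem2.6} without a detailed proof, remarking only that it follows from Theorem~\ref{thm2.5} (Lucky's Theorem), and your argument is exactly the natural unpacking of that remark --- reduce the count of internal completion edges to $\sum_{i<j}n_in_j-\varepsilon(\langle X\rangle)$, then invoke the balanced-partition optimum of Theorem~\ref{thm2.5}. The bookkeeping concern you flag at the end is not a real obstacle; once the colouring is proper, all cross-class non-edges inside $X$ can indeed be added simultaneously, so the formula you wrote is immediate.
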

It is known that for a graph which does not permit a colour allocation as prescribed in Lemma~\ref{lem2.6}, an optimal near-completion $\ell$-partition of the vertex set exists which yields the maximum chromatic completion edges \cite{7}. Note that the colouring in accordance with Lemma~\ref{lem2.6} is essentially a special case of an optimal near-completion $\ell$-partition of the vertex set $V(G).$ Henceforth, a chromatic colouring in accordance with either Lemma~\ref{lem2.6} or an optimal near-completion $\ell$-partition will be called a \emph{Lucky colouring} denoted by, $\varphi_{\mathcal{L}}(G).$ If all possible Lucky colourings of a graph $G,$ yield identical vertex partitions then graph $G$ is said to be, \emph{stable in respect of chromatic completion}. Such graph is denoted to be, $SCC.$ It means that if $\chi(G)\geq 2,$ the different Lucky colourings only effect pairwise interchange of colour classes. Such different colourings are said to be \emph{congruent} and is denoted by, $\varphi_{\mathcal{L}}(G)_1 \cong \varphi_{\mathcal{L}}(G)_2.$ For all graphs for which $\zeta(G)=0$ it follows that $E_\chi(G) =\emptyset$ and therefore, inherently unique. All such graphs are inherently $SCC.$ Unless mentioned otherwise, graphs for which $\zeta(G)>0$ will be considered hereafter.
\begin{theorem}
\label{thm2.7}
For a graph $G$, $\zeta(G) >0$ is $SCC$ if and only if the chromatic completion edge set $E_\chi(G)$ is unique.
\end{theorem}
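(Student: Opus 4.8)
The plan is to recast both implications in the language of vertex partitions. The key preliminary observation, implicit in Lemma~\ref{lem2.2} and Corollary~\ref{col2.3}, is this: a Lucky colouring $\varphi_{\mathcal{L}}(G)$ with colour classes $V_1,V_2,\dots,V_\chi$ has pseudo completion graph $H(\varphi_{\mathcal{L}}(G))$ equal to the complete multipartite graph $K_{|V_1|,|V_2|,\dots,|V_\chi|}$ realised precisely on those classes; since $\varphi_{\mathcal{L}}(G)$ is a proper colouring of $G$ we have $G\subseteq H(\varphi_{\mathcal{L}}(G))$; and by Corollary~\ref{col2.3} this colouring attains $\zeta(G)=\varepsilon(H(\varphi_{\mathcal{L}}(G)))-\varepsilon(G)$. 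Hence the chromatic completion edge set it produces is $E_\chi(G)=E(H(\varphi_{\mathcal{L}}(G)))\setminus E(G)$, and conversely every chromatic completion graph, being obtained by adding a maximum-size set of $\zeta(G)$ edges while keeping a $\chi$-colouring proper, equals $H(\varphi_{\mathcal{L}}(G))$ for the corresponding Lucky colouring. So the possible chromatic completion edge sets of $G$ are exactly the sets $E(H(\varphi_{\mathcal{L}}(G)))\setminus E(G)$ as $\varphi_{\mathcal{L}}(G)$ ranges over all Lucky colourings, and each such set is a function of the partition $\{V_1,\dots,V_\chi\}$ alone.

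For the forward implication, assume $G$ is $SCC$. Let $E_\chi(G)_1$ and $E_\chi(G)_2$ be chromatic completion edge sets arising from Lucky colourings $\varphi_{\mathcal{L}}(G)_1$ and $\varphi_{\mathcal{L}}(G)_2$. By the definition of $SCC$ these colourings induce identical vertex partitions — they differ at most by a pairwise interchange of colour classes, i.e.\ $\varphi_{\mathcal{L}}(G)_1\cong\varphi_{\mathcal{L}}(G)_2$ — hence $H(\varphi_{\mathcal{L}}(G)_1)=H(\varphi_{\mathcal{L}}(G)_2)$, and therefore $E_\chi(G)_1=E(H(\varphi_{\mathcal{L}}(G)_1))\setminus E(G)=E(H(\varphi_{\mathcal{L}}(G)_2))\setminus E(G)=E_\chi(G)_2$. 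Thus $E_\chi(G)$ is unique.

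For the converse I would argue by contraposition. Suppose $G$ is not $SCC$; then there are Lucky colourings $\varphi_{\mathcal{L}}(G)_1$ and $\varphi_{\mathcal{L}}(G)_2$ whose induced partitions $P_1$ and $P_2$ of $V(G)$ are not identical as unordered partitions, i.e.\ genuinely different and not merely a relabelling of colour classes. Then the ``lies in a common class'' equivalence relations of $P_1$ and $P_2$ differ, so there is a pair of vertices $u,v$ in a common class of one of them, say $P_1$, but in distinct classes of $P_2$. Since $H(\varphi_{\mathcal{L}}(G)_i)$ is the complete multipartite graph on $P_i$, this yields $uv\notin E(H(\varphi_{\mathcal{L}}(G)_1))$ while $uv\in E(H(\varphi_{\mathcal{L}}(G)_2))$, so $E(H(\varphi_{\mathcal{L}}(G)_1))\neq E(H(\varphi_{\mathcal{L}}(G)_2))$. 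Because $E(G)\subseteq E(H(\varphi_{\mathcal{L}}(G)_i))$ for $i=1,2$, we have the disjoint decompositions $E(H(\varphi_{\mathcal{L}}(G)_i))=E(G)\cup E_\chi(G)_i$ with $E(G)\cap E_\chi(G)_i=\emptyset$; so $E_\chi(G)_1=E_\chi(G)_2$ would force $E(H(\varphi_{\mathcal{L}}(G)_1))=E(H(\varphi_{\mathcal{L}}(G)_2))$, a contradiction. Hence $E_\chi(G)_1\neq E_\chi(G)_2$, and $E_\chi(G)$ is not unique.

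The set-theoretic bookkeeping above is routine; the step I expect to be the genuine crux is the preliminary one — establishing cleanly that the chromatic completion edge sets of $G$ are exactly $\{E(H(\varphi_{\mathcal{L}}(G)))\setminus E(G)\}$ and that each is determined by, and in turn determines, the vertex partition of its Lucky colouring, so that a difference between two partitions can never be hidden inside $E(G)$. With that secured, both directions are short.
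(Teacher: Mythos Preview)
Your proof is correct and follows essentially the same idea as the paper's: both directions rest on the observation that the chromatic completion edge set associated to a Lucky colouring is precisely $E(H(\varphi_{\mathcal{L}}(G)))\setminus E(G)$ and hence depends only on the underlying vertex partition. Your treatment is in fact more careful than the paper's --- the paper asserts the implication ``$E_\chi(G)$ unique $\Rightarrow$ identical partitions'' without argument, whereas your contrapositive explicitly exhibits a distinguishing edge $uv$ (and implicitly uses that $u,v$ lying in a common class of $P_1$ forces $uv\notin E(G)$, so the difference cannot be hidden inside $E(G)$).
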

\begin{proof}
Let the vertices of a graph $G$ of order $n\geq 1$ be, $v_i$, $i=1,2,3,\dots, n.$ Assume that the chromatic completion edge set $E_\chi(G)$ is unique. Then all possible Lucky colourings of $G$ yield identical vertex partitions with only possible interchange of colour classes. By definition $G$ is $SCC.$

Converse: Assume that $G$ is $SCC$ the vertex partitions over all possible Lucky colourings are identical. Since, $\zeta(G)>0$ at least one edge $v_iv_j\in E_\chi(G)$ exists. It means that, if for any given Lucky colouring the edge $v_iv_j \in E_\chi(G)$ then, $v_iv_j \notin E(G)$ and $c(v_i)\neq c(v_j).$ It also means that in any other Lucky colouring, $c(v_j)\neq c(v_j)$ hence, $v_iv_j \in E_\chi(G)$ over all Lucky colourings. Therefore $E_\chi(G)$ is unique.
\end{proof}
\begin{corollary}
\label{col2.8}
For a graph $G,$ $\zeta(G)>0,$ the chromatic completion edge set $E_\chi(G),$ is unique if $G$ is $2$-colourable.
\end{corollary}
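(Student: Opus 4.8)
The plan is to reduce the statement to the elementary fact that a connected bipartite graph has an essentially unique bipartition, and then to invoke Theorem~\ref{thm2.7}. First I would observe that since $G$ is connected with at least one edge and is $2$-colourable, we have $\chi(G)=2$, so every chromatic colouring $\varphi_\chi(G)$ is a proper $2$-colouring and its two colour classes $\{V_1,V_2\}$ form a bipartition of $G$ (one or both of which is the empty set only in degenerate cases excluded here).

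Next I would show that this bipartition does not depend on the chosen chromatic colouring, up to interchanging $V_1$ and $V_2$. Fix a vertex $v_1$ and a proper $2$-colouring $c$. For any vertex $v_i$, a $v_1$--$v_i$ walk alternates colours, and in a bipartite graph all $v_1$--$v_i$ walks have the same length parity; hence $c(v_i)$ is forced by $c(v_1)$ together with that parity. Since $G$ is connected, such a walk exists for every $v_i$, so once $c(v_1)$ is fixed the entire colouring is determined, the only freedom being the choice $c(v_1)\in\{c_1,c_2\}$, which merely swaps the two classes. Consequently any two chromatic colourings of $G$ --- in particular any two Lucky colourings $\varphi_{\mathcal{L}}(G)_1$ and $\varphi_{\mathcal{L}}(G)_2$ --- yield identical vertex partitions and differ only by a pairwise interchange of colour classes, i.e.\ $\varphi_{\mathcal{L}}(G)_1 \cong \varphi_{\mathcal{L}}(G)_2$.

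By the definition of stability this means $G$ is $SCC$. Since $\zeta(G)>0$ by hypothesis, Theorem~\ref{thm2.7} applies and yields that $E_\chi(G)$ is unique, completing the argument. (Alternatively one could bypass Theorem~\ref{thm2.7}: with the bipartition $\{V_1,V_2\}$ fixed, the pseudo completion graph $H(\varphi)=K_{|V_1|,|V_2|}$ of Lemma~\ref{lem2.2} is the same for every chromatic colouring, so the completion edges are exactly the non-edges of $G$ joining $V_1$ to $V_2$, an invariantly determined set.)

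I do not expect a serious obstacle here; the only points needing care are making the parity argument precise --- namely that in a connected bipartite graph all walks between two fixed vertices have the same length modulo $2$, so that the colour of each vertex is well defined once one vertex is coloured --- and noting explicitly that a Lucky colouring is in particular a proper $2$-colouring, so the bipartition argument applies to it verbatim.
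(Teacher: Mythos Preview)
Your proof is correct and follows essentially the same route as the paper: establish that a connected $2$-colourable graph has a unique bipartition (up to swapping classes), conclude that all Lucky colourings are congruent so $G$ is $SCC$, and then invoke Theorem~\ref{thm2.7}. The paper merely asserts the uniqueness of the bipartition in one line, whereas you supply the standard parity-of-walk-length justification, but the structure and key lemma are identical.
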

\begin{proof}
Consider any $2$-colourable graph $G,$ $\zeta(G) > 0.$ The vertex set can be partitioned in two unique subsets. Only two Lucky colourings are possible i.e. interchanging colours $c_1$ and $c_2$ hence, $G$ is $SCC.$ By Theorem~\ref{thm2.7} the result follows.
\end{proof}
\begin{corollary}
\label{col2.9}
A graph $G,$ $\chi(G)\geq 3$ which has a pendant vertex is not $SCC.$
\end{corollary}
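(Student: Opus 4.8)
The plan is to start from an arbitrary Lucky colouring of $G$ and manufacture a second Lucky colouring whose colour-class partition is not a relabelling of the first. By Theorem~\ref{thm2.7} (together with $\zeta(G)>0$, which holds since $G$ is not complete multipartite) this is exactly the assertion that $G$ is not $SCC$.

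For the set-up, let $v$ be the pendant vertex and $u$ its unique neighbour, and let $\varphi_{\mathcal L}$ be any Lucky colouring, say $\varphi_{\mathcal L}(v)=c_k$ and $\varphi_{\mathcal L}(u)=c_\ell$ with $k\neq \ell$. Since $\chi(G)\ge 3$ there is a third colour $c_m$, $m\notin\{k,\ell\}$, whose colour class is non-empty. Because $v$ is adjacent only to $u$ and $u$ is not coloured $c_m$, recolouring $v$ by $c_m$ is still proper, and it moves $v$ out of its current colour class into the class of $c_m$, so the induced partition genuinely changes. The real work is to arrange that this new colouring is again Lucky. Recall from Lemma~\ref{lem2.6} that in a Lucky colouring the colour-class sizes pairwise differ by at most one, and that $\varepsilon(H(\varphi))=\sum_{i<j}\theta(c_i)\theta(c_j)$ is a function of the multiset of class sizes alone; relocating one vertex from the class of $c_k$ to that of $c_m$ leaves this multiset unchanged precisely when $\theta(c_k)=\theta(c_m)+1$. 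Hence, if some third colour class has size one less than that of $v$, moving $v$ into it is itself a Lucky recolouring with a different partition, and we are finished.

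In the remaining case, where $v$'s colour class is already of smallest size, I would first move $v$ to $c_m$ and then restore balanced class sizes by transferring one further vertex $w$ out of the (now over-full) class of $c_m$ into a deficient class, choosing $w$ so as to create no bad edge, i.e. so that $w$ has no neighbour in the target class. I expect this compensating step to be the main obstacle: one must show such a $w$ always exists, and this is delicate because it is sensitive to the neighbourhood of $u$. I would organise it by cases according to $\chi(G)$ relative to $\nu(G)$ and the location of $u$, after first replacing $\varphi_{\mathcal L}$ by a Lucky colouring chosen to make the classes of $v$ and of $u$ as small as possible. Once a valid $w$ is produced in every case, the two colourings in hand induce non-congruent partitions, so $E_\chi(G)$ is not unique and Theorem~\ref{thm2.7} gives the conclusion.
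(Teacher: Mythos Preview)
Your argument stalls exactly where you say it does: in the case where the pendant's colour class already has minimum size, you move $v$ into the class of $c_m$ and then propose to rebalance by shifting some $w$ out of that class into a deficient one, ``choosing $w$ so as to create no bad edge.'' You never establish that such a $w$ exists, and you explicitly defer this to an unspecified case analysis. As written, the proposal is a plan rather than a proof.

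The paper's argument bypasses the rebalancing problem with a device you do not use: instead of moving the pendant unilaterally and then repairing, it performs a single \emph{colour interchange} between the pendant and a vertex $w$ of a third colour $c_m$. A swap leaves the multiset of class sizes $\{\theta(c_i)\}$ unchanged, so the resulting colouring is automatically Lucky with the same $\zeta(G)$, while the vertex partition has genuinely changed. Thus there is no ``over-full'' class to fix and no case split on class sizes. (The paper is itself brisk about why $w$ may legally take the pendant's former colour, so the swap does not remove every subtlety; but it is the organising idea that dissolves the balancing obstacle you identified, and it is what your outline is missing.)
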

\begin{proof}
Let $u$ be a pendant vertex and assume without loss of generality that, $c(u)=c_1.$ Also assume $u$ is adjacent to vertex $v$ and $c(v)=c_2.$ Since $\chi(G)\geq 3$ a vertex $w \in V(G)$ exists with $c(w)=c_j,$ $j\neq 1,2$ and edge $uw\notin E(G).$ Obviously the colour interchange $c(u)=c_j$ and $c(w)=c_1$ is possible and the colouring remains a Lucky colouring. Also, $\zeta(G)$ remains constant. Whereas before the colour interchange, edges $uw' \in E_\chi(G),$ $\forall~w'\in \{v':c(v')=c_j, v'\neq w\},$ these edges do not exist after the colour change. Hence, not all Lucky colourings yield identical vertex partitions. Thus, $G$ is not $SCC.$ 
\end{proof}
For any proper colouring of a graph with $k$ colours the vertex set can be partitioned into $k$ independent subsets say, $X_i,$ $i=1,2,3,\dots,k.$ It is obvious that $c(X_i)\neq c(X_j)$ if and only if $i\neq j.$ Call these vertex subsets, \emph{chromatic vertex subsets} of $V(G).$
\begin{theorem}
\label{thm2.10}
A graph $G$, $\zeta(G)>0$ is $SCC$ if and only if for any Lucky colouring the chromatic vertex subsets, $X_i$, $i=1,2,3,\dots,\chi(G)$ are such that every vertex $v\in X_i$, $\forall i$ is adjacent to at least one vertex $u\in X_j$, $j=1,2,3,\dots,i-1,i+1,\dots,\chi(G)$.
\end{theorem}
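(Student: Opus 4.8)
The plan is to prove both directions by contraposition, linking the ``every vertex has a neighbour in every other colour class'' condition to the invariance of the vertex partition under Lucky recolourings. The key observation, already exploited in Corollary~\ref{col2.9}, is the following local move: if some vertex $v\in X_i$ has \emph{no} neighbour in some class $X_j$ (with $j\neq i$), then reassigning $c(v)$ from $c_i$ to $c_j$ keeps the colouring proper; and since by Lemma~\ref{lem2.6}/Theorem~\ref{thm2.5} the completion count $\zeta(G)$ is governed only by the cardinalities of the colour classes, one must check that this move does not decrease the attainable number of completion edges — equivalently, that after the move one can still realise a Lucky partition. I would phrase this carefully: a Lucky colouring is one whose class sizes realise the optimal near-completion $\ell$-partition, and a single vertex moved between two classes changes the size vector; so the honest statement is that if \emph{some} Lucky colouring admits such a starved vertex $v$, then there is \emph{another} proper colouring, congruent in completion value, with a different partition — which already suffices to contradict $SCC$ once we also know this other colouring is itself Lucky.

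For the forward direction ($SCC \Rightarrow$ the adjacency condition), assume for contradiction that in some Lucky colouring a vertex $v\in X_i$ has no neighbour in $X_j$. Recolour $v$ with $c_j$. First I would argue this recolouring is still a Lucky colouring: it is proper, and by Corollary~\ref{col2.3} together with Lucky's Theorem~\ref{thm2.5} the value $\zeta(G)$ depends only on the multiset of class sizes, so I need the moved configuration to still be optimal. The clean way is to note that moving $v$ from $X_i$ to $X_j$ when $X_j$ need not be adjacent-complete is precisely the kind of rebalancing permitted inside an optimal near-completion partition (when $|X_i|$ and $|X_j|$ were already ``balanced'' in the sense of Lemma~\ref{lem2.6}, swapping one vertex produces another balanced partition, hence another Lucky colouring). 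Then the new partition $\{X_1,\dots,X_i\setminus\{v\},\dots,X_j\cup\{v\},\dots\}$ is genuinely different from the old one — it is not obtained by interchanging colour classes, since the class \emph{sizes} as an unordered pair $(|X_i|,|X_j|)$ are preserved but \emph{which vertex} sits where has changed, so the induced partition of $V(G)$ differs. This contradicts $G$ being $SCC$.

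For the converse (adjacency condition $\Rightarrow SCC$), suppose every vertex in every class is adjacent to at least one vertex in every other class, yet $G$ is not $SCC$; then two Lucky colourings $\varphi_1,\varphi_2$ give non-congruent partitions, so there exist vertices $v,w$ lying in the same class under $\varphi_1$ but in different classes under $\varphi_2$ (after matching up classes optimally). I would track a vertex $v$ whose colour-$\varphi_1$-class is $X_i$ but whose colour-$\varphi_2$-class corresponds to some $X_j$, $j\neq i$; the adjacency hypothesis applied in the $\varphi_1$-colouring says $v$ has a neighbour $u$ with $c_{\varphi_1}(u)=c_j$, so under any recolouring that keeps the $\varphi_1$-partition we cannot legally give $v$ colour $c_j$ — but the partition-difference forces exactly such a reassignment somewhere in the chain of single-vertex moves connecting $\varphi_1$ to $\varphi_2$ (since both are Lucky, hence have the same class-size multiset, any two such colourings are connected by swaps that preserve sizes, and the first swap that actually moves a vertex to a new class hits a starved-neighbour obstruction). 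This contradiction gives $SCC$.

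The main obstacle I anticipate is the ``still Lucky'' bookkeeping in the forward direction: I must be sure that the single-vertex recolouring genuinely lands in the class of Lucky colourings and not merely in the class of proper colourings, because $SCC$ quantifies only over Lucky colourings. Pinning this down cleanly requires using Lemma~\ref{lem2.6} in the precise form that the optimum is attained by \emph{balanced} partitions (sizes $\lfloor m/t\rfloor$ or one more), so that moving a vertex between two classes of sizes differing by at most one yields another balanced — hence optimal, hence Lucky — partition; the degenerate cases (a class of size one, or $\chi(G)$ small) would need to be checked separately, mirroring the case analysis in Corollary~\ref{col2.9}.
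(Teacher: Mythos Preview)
Your approach matches the paper's almost exactly. For the direction $SCC \Rightarrow$ adjacency condition, both you and the paper argue by contraposition via the same local move: recolour a starved vertex $v\in X_i$ with the colour of $X_j$ to obtain a different partition. The paper simply asserts that after this move ``$\zeta(G)$ remains unchanged'' and concludes; your explicit worry about whether the recoloured version is still a \emph{Lucky} colouring is precisely the step the paper glosses over. Your proposed fix (balanced class sizes from Lemma~\ref{lem2.6}, so that swapping one vertex between two classes whose sizes differ by at most one preserves optimality) is more honest than what the paper writes, though note that when $|X_i|\leq |X_j|$ the move strictly \emph{decreases} the completion count, so some additional care is genuinely needed---this is a real soft spot in the paper's own argument, not just in yours.

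For the direction adjacency condition $\Rightarrow SCC$, the paper gives essentially a one-sentence assertion (``implies that identical vertex partitions are yielded for all Lucky colourings. Hence, $G$ is $SCC$''), with no further justification. Your attempt via chains of single-vertex moves between two non-congruent Lucky colourings is more ambitious than anything the paper provides, but it is also not airtight: there is no a~priori reason two Lucky colourings with the same class-size multiset are connected by a sequence of single-vertex moves each of which stays Lucky and proper. So on this half you are trying to prove more than the paper does, and the argument as sketched has its own gap; but you are not missing anything relative to the paper, which simply does not argue this direction in detail.
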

\begin{proof}
For a graph $G$ for which a Lucky colouring exists such that the chromatic vertex subsets, $X_i$, $i=1,2,3,\dots,\chi(G)$ are such that every vertex $v\in X_i$, $\forall i$ is adjacent to at least one vertex $u\in X_j$, $j=1,2,3,\dots,i-1,i+1,\dots,\chi(G)$ implies that identical vertex partitions are yielded for all Lucky colourings. Hence, $G$ is $SCC$.\\\\
Conversely, let $G$ be $SCC$. Consider a Lucky colouring and its corresponding chromatic vertex subsets, $X_i$, $i=1,2,3,\dots,\chi(G)$. If a vertex $v\in X_i$ exists such that $v$ is not adjacent to any vertex in say, $X_j$, $i\neq j$ then $v$ may be coloured $c(X_j)$ whilst $\zeta(G)$ remains unchanged. Now the chromatic vertex subsets changed to include $X_i\backslash v$ and $X_j\cup\{v\}$. It implies that $G$ is not $SCC$ which is a contradiction. Hence, $v$ must be adjacent to at least one vertex $v_j\in X_j$, $\forall j$, $j=1,2,3,\dots,i-1,i+1,\dots,\chi(G)$.
\end{proof} 
An important implication is that a graph $G$ which satisfies the conditions of Theorem~\ref{thm2.10} in respect of its chromatic vertex subsets, has a unique set of chromatic completion edges.

Recall that the $\emph{rainbow neighbourhood convention}$ prescribed that we colour the vertices of a graph $G$ in such a way that $\mathcal{C}_1=I_1,$ the maximal independent set in $G,$ $\mathcal{C}_2=I_2,$ the maximal independent set in $G_1=G-\mathcal{C}_1$ and proceed like this until all vertices are coloured \cite{3,4,5}. In \cite{3} the concept of a rainbow neighbourhood yielded by vertex $u$ in graph $G$ was introduced. It is important to note that the graph $G$ had to be coloured in accordance to the rainbow neighbourhood convention to determine the rainbow neighbourhood number, $r_\chi(G)$ of a graph $G.$ Hence, well-defined colouring conventions may serve to generalise the concept of the rainbow neighbourhood number i.e. the number of vertices which yield rainbow neighbourhoods in $G.$ The rainbow neighbourhood number associated with a Lucky colouring of a graph $G$ will be denoted by, $r_{\mathcal{L}}(G).$
\begin{theorem}
\label{thm2.11}
A graph $G$ is $SCC$ if and only if for a Lucky colouring of $G,$ $r_{\mathcal{L}}(G)=|V(G)|=\nu(G).$
\end{theorem}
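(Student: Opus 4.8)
The plan is to reduce Theorem~\ref{thm2.11} to Theorem~\ref{thm2.10} by unwinding the definition of the rainbow neighbourhood number. First I would recall that, under a given colouring, a vertex $u\in V(G)$ yields a rainbow neighbourhood precisely when its closed neighbourhood $N[u]$ meets every colour class, i.e. $N[u]$ contains at least one vertex coloured $c_k$ for each $k=1,2,3,\dots,\chi(G)$. Since $c(u)$ already accounts for one of these colours, this is equivalent to saying that $u$ is adjacent to at least one vertex in each of the $\chi(G)-1$ chromatic vertex subsets $X_j$ with $c(X_j)\neq c(u)$. Because each vertex contributes at most $1$ to $r_{\mathcal{L}}(G)$, we always have $r_{\mathcal{L}}(G)\leq \nu(G)$, with equality if and only if \emph{every} vertex of $G$ yields a rainbow neighbourhood.

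Next I would carry out the two implications. Suppose $G$ is $SCC$. Fix a Lucky colouring $\varphi_{\mathcal{L}}(G)$ with chromatic vertex subsets $X_1,X_2,\dots,X_{\chi(G)}$. By (the converse direction of) Theorem~\ref{thm2.10}, every vertex $v\in X_i$ is adjacent to at least one vertex of each $X_j$, $j\neq i$; by the equivalence above this says exactly that $v$ yields a rainbow neighbourhood, and since $v$ was arbitrary, $r_{\mathcal{L}}(G)=\nu(G)$. Conversely, suppose that for a Lucky colouring of $G$ we have $r_{\mathcal{L}}(G)=\nu(G)$. Then every vertex yields a rainbow neighbourhood, hence, again by the equivalence, every $v\in X_i$ is adjacent to at least one vertex of each $X_j$, $j\neq i$. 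This is precisely the hypothesis of the forward direction of Theorem~\ref{thm2.10}, so $G$ is $SCC$.

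One point that needs care is the quantifier ``for a Lucky colouring'' in the statement: a priori $r_{\mathcal{L}}(G)$ could depend on the chosen Lucky colouring. I would remark that when $G$ is $SCC$ all Lucky colourings are congruent, so they induce the same partition into chromatic vertex subsets up to a relabelling of colour classes, and the adjacency relation is unaffected by such relabelling; hence $r_{\mathcal{L}}(G)$ is in fact independent of the choice on the $SCC$ side, so the existential and universal readings coincide there. (For the inherently $SCC$ graphs with $\zeta(G)=0$ there is nothing to check, consistent with the standing convention $\zeta(G)>0$.)

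I expect the main obstacle to be expository rather than mathematical: making the translation between ``$N[u]$ is a rainbow neighbourhood'' and ``the adjacency condition of Theorem~\ref{thm2.10}'' airtight, and being explicit that the equality $r_{\mathcal{L}}(G)=\nu(G)$ forces the condition for \textbf{every} vertex simultaneously, which is exactly the global hypothesis Theorem~\ref{thm2.10} demands.
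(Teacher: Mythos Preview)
Your proposal is correct and follows exactly the paper's approach: both directions are obtained by translating ``every vertex yields a rainbow neighbourhood'' into the adjacency condition of Theorem~\ref{thm2.10} and then invoking that theorem. Your write-up is in fact more careful than the paper's (which dispatches each direction in a single sentence), particularly in spelling out the equivalence and the quantifier issue for Lucky colourings.
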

\begin{proof}
If $r_{\mathcal{L}}(G)=|V(G)|=\nu(G)$ then every vertex is adjacent to at least one coloured vertex of each colour in a Lucky colouring of $G$. By Theorem~\ref{thm2.10}  it follows that $G$ is $SCC.$

Conversely, if $G$ is $SCC$ then by Theorem ~\ref{thm2.10} the result follows.
\end{proof}
Since a Lucky colouring is a relaxation of the Rainbow Neighbourhood Convention it follows that if a graph is $SCC$ in respect of a colouring in accordance with the Rainbow Neighbourhood Convention, it is $SCC$ in accordance with a Lucky colouring. In fact, it follows that in respect of such $G$ both colouring conventions are congruent colourings. See \cite{3,4,5,12} for further results in respect of rainbow neighbourhood numbers.

\section{Chromatic completion and stability in respect of $\mathcal{J}$-colouring}
\label{s3}
Thus far the notion of chromatic completion of a graph related strictly to chromatic colourings by the convention of Lucky colourings. This requirement can be relaxed to generalise over all chromatic colourings $\varphi_\chi(G).$ Clearly, we have by analogy that, $\zeta_\varphi(G)\leq \zeta(G).$ We further generalise to a recently introduced colouring convention called Johan colouring or $\mathcal{J}$-colouring [6,7,8,11]. The chromatic completion edge set denoted by $E_{\mathcal{J}}(G),$  will be investigated. Corresponding to a $\mathcal{J}$-colouring the cardinality of the chromatic completing edge set is denoted by, $\zeta_{\mathcal{J}}(G) = |E_{\mathcal{J}}(G)|.$ Recall the definition from [11].

\begin{definition} 
\label{def3.1}
A maximal proper colouring of a graph $G$ is a \textit{Johan colouring} of $G,$ denoted by $\mathcal{J}$-colouring, if and only if every vertex of $G$ yields a rainbow neighbourhood of $G$. The maximum number of colours in a $\mathcal{J}$-colouring is called the \textit{$\mathcal{J}$-chromatic number} of $G,$ denoted by $\mathcal{J}(G).$
\end{definition}

\begin{figure}[htbp]
\begin{center}
\scalebox{0.5}{
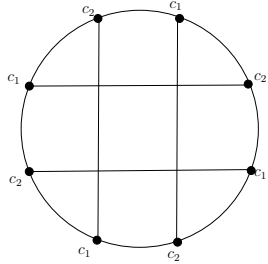}
\caption{$G$ with a Lucky colouring}
\label{e1}
\end{center}
\end{figure}

\begin{figure}[htbp]
\begin{center}
\scalebox{0.5}{
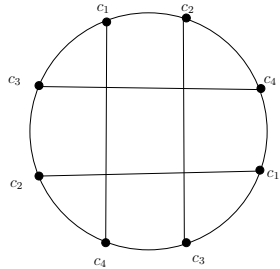}
\caption{$G$ with a $\mathcal{J}$-colouring}
\label{e2}
\end{center}
\end{figure}

Recall that not all graphs permit a $\mathcal{J}$-colouring [11]. Unless stated otherwise, all graphs in this subsection will permit a $\mathcal{J}$-colouring. Figure~\ref{e1}  depicts a graph with a Lucky colouring with $\chi(G)=2$ colours. It is easy to verify that $\zeta(G)=4.$ Figure~\ref{e2} depicts the same graph with a $\mathcal{J}$-colouring with $\mathcal{J}(G)=4$ colours. Note that the $\mathcal{J}$-colouring complies with the colour allocations of Lemma 2.6. It is easy to verify that, $ \zeta_{\mathcal{J}}(G)=|E_{\mathcal{J}}(G)|=12.$ The aforesaid serves as an illustration of the next result.
\begin{proposition}
\label{pro3.2}
For a graph $G$ it follows that, $\zeta_\varphi(G) \leq \zeta(G)\leq \zeta_{\mathcal{J}}(G).$
\end{proposition}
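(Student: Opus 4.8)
The proof splits into the two inequalities $\zeta_\varphi(G)\le\zeta(G)$ and $\zeta(G)\le\zeta_{\mathcal J}(G)$. The first is just the observation made before the statement: a Lucky colouring is a particular chromatic colouring, so by Corollary~\ref{col2.3} the quantity $\varepsilon(H(\varphi))-\varepsilon(G)$ associated with any fixed chromatic colouring $\varphi$ is at most $\zeta(G)$, whence $\zeta_\varphi(G)\le\zeta(G)$; no further work is needed here.

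For $\zeta(G)\le\zeta_{\mathcal J}(G)$ I would first reduce to a statement about colour-class sizes. If a proper colouring of $G$ induces the independent classes $Z_1,\dots,Z_k$, then $H(\varphi)=K_{|Z_1|,\dots,|Z_k|}$ contains $G$, since every edge of $G$ joins two distinct classes; hence the number of completion edges it permits equals $\binom{\nu(G)}{2}-\sum_{i=1}^{k}\binom{|Z_i|}{2}-\varepsilon(G)$. Evaluating this for a Lucky colouring (classes $X_1,\dots,X_\chi$, realising $\zeta(G)$ by Corollary~\ref{col2.3}) and for the $\mathcal J$-colouring defining $\zeta_{\mathcal J}(G)$ (classes $Y_1,\dots,Y_{\mathcal J(G)}$, as in Figure~\ref{e2}), the claim becomes the purely numerical inequality $\sum_{i}\binom{|Y_i|}{2}\le\sum_{j}\binom{|X_j|}{2}$, that is, $\sum_i|Y_i|^2\le\sum_j|X_j|^2$.

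To produce this I would combine two facts. (a) Splitting a class of a proper colouring into two nonempty parts never decreases $\varepsilon(H(\varphi))-\varepsilon(G)$, because $\binom{a}{2}+\binom{b}{2}\le\binom{a+b}{2}$ and the only pairs $H$ loses lie inside the split class, which carries no edge of $G$; hence from a Lucky colouring one reaches, for each $k$ with $\chi\le k\le\nu(G)$, a proper $k$-colouring whose completion number is $\ge\zeta(G)$, so the maximum completion number over proper $k$-colourings is nondecreasing in $k$. (b) The $\mathcal J$-colouring defining $\zeta_{\mathcal J}(G)$ uses $\mathcal J(G)\ge\chi(G)$ colours and obeys the balanced allocation of Lemma~\ref{lem2.6}, so $\sum_i|Y_i|^2$ is as small as the structure of $G$ allows among $\mathcal J(G)$-colourings. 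Putting (a) and (b) together one argues that $\sum_i|Y_i|^2$ cannot exceed the minimal sum of squares over $\chi$-colourings, namely $\sum_j|X_j|^2$, and then subtracting $\nu(G)$ and halving recovers the binomial inequality.

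The step I expect to be genuinely delicate is exactly the comparison in the last sentence, because the $\mathcal J$-partition need not refine any optimal chromatic partition: for $C_6$ the Lucky colouring is the bipartition while the $\mathcal J$-colouring is the periodic colouring $1,2,3,1,2,3$, neither refining the other, and one cannot argue by edge-set inclusion either since $E_\chi(C_6)\not\subseteq E_{\mathcal J}(C_6)$. Thus the ``refinement adds edges'' chain of (a) cannot be run directly from a Lucky colouring to the $\mathcal J$-colouring, and the argument must instead either show that some completion-optimal $\mathcal J(G)$-colouring is in fact a $\mathcal J$-colouring (so that (a) transfers to it), or bound $\sum_i|Y_i|^2$ using the intrinsic property of a $\mathcal J$-colouring that every vertex is adjacent to a vertex of every other colour class (the condition of Theorem~\ref{thm2.10}), which forces the $\mathcal J$-partition to be balanced enough even when $G$ forbids a perfectly balanced $\mathcal J(G)$-colouring. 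Settling one of these alternatives is the crux; the remainder is bookkeeping with $\varepsilon(K_{a_1,\dots,a_k})=\binom{n}{2}-\sum\binom{a_i}{2}$.
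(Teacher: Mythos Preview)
Your treatment of $\zeta_\varphi(G)\le\zeta(G)$ matches the paper's: both simply invoke the maximality built into $\zeta(G)$ (the paper cites Theorem~\ref{thm2.5}, you cite Corollary~\ref{col2.3}; same content).

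For $\zeta(G)\le\zeta_{\mathcal J}(G)$ the paper's proof is a single sentence: the inequality ``is a direct consequence of the number theoretical result (or optimal near-completion $\ell$-partition) as the number of colours i.e.\ $\ell$ increases whilst the order of $G$ remains constant.'' That is, the paper takes your observation~(a)---more colour classes can only raise the completion count---together with $\mathcal J(G)\ge\chi(G)$ as the whole argument. It does not engage with the refinement obstruction you raise.

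You are correct that (a) by itself does not finish, since the $\mathcal J$-partition need not refine any Lucky partition (your $C_6$ example is exactly right, and edge-set inclusion fails too). The paper sidesteps this by tacitly treating the $\mathcal J$-colouring as one obeying the allocation of Lemma~\ref{lem2.6}; note the sentence immediately preceding the proposition, ``the $\mathcal J$-colouring complies with the colour allocations of Lemma~2.6.'' Under that reading the comparison is not between two arbitrary partitions but between the optimal near-completion $\chi$-partition and the optimal near-completion $\mathcal J(G)$-partition, and then Lucky's Theorem gives the inequality immediately---this is precisely your route~(b), without the detour through refinements. So the ``crux'' you flag is resolved in the paper by assumption rather than by argument: it takes as given that a $\mathcal J$-colouring with the Lemma~\ref{lem2.6} class sizes is available. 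Whether that is always the case for graphs admitting a $\mathcal J$-colouring is not addressed in the paper either; your write-up is simply more candid about where the difficulty lies.
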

\begin{proof}
 That $\zeta_\varphi(G) \leq \zeta(G)$ holds follows directly from Theorem~\ref{thm2.5}. The result $\zeta(G)\leq \zeta_{\mathcal{J}}(G)$ is a direct consequence of the number theoretical result (or optimal near-completion $\ell$-partition) as the number of colours i.e. $\ell,$ increases whilst the order of $G$ remains constant.
\end{proof}
Proposition~\ref{pro2.12} can informally be understood by saying, that if for a particular Lucky colouring, some vertices in some colour classes are allocated new colours, then more edges  are permitted in terms of the definition of chromatic completion of a graph.
\begin{theorem}
\label{thm3.3}
For a graph $G$ it follows that, $\zeta_\varphi(G) = \zeta(G)= \zeta_{\mathcal{J}}(G)$ if and only if $\varphi_\chi(G) \cong \varphi_{\mathcal{L}}(G) \cong \varphi_{\mathcal{J}}(G).$
\end{theorem}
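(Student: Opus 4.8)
The plan is to prove the biconditional by establishing each direction separately, relying on Proposition~\ref{pro3.2} which already gives the chain of inequalities $\zeta_\varphi(G) \leq \zeta(G)\leq \zeta_{\mathcal{J}}(G)$ unconditionally. So equality of all three quantities is equivalent to $\zeta_\varphi(G) = \zeta_{\mathcal{J}}(G)$, i.e. to the chain collapsing.

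\textbf{Sufficiency.} First I would assume $\varphi_\chi(G) \cong \varphi_{\mathcal{L}}(G) \cong \varphi_{\mathcal{J}}(G)$, meaning all three colouring conventions produce congruent colourings — identical vertex partitions up to interchange of colour classes. Since the chromatic completion edge set is determined entirely by the induced vertex partition (an edge $v_iv_j$ is a completion edge precisely when $v_iv_j\notin E(G)$ and $v_i,v_j$ lie in different parts), congruent colourings yield the same number of permissible completion edges. Hence $\zeta_\varphi(G)$, which is computed over general chromatic colourings but optimised, coincides with the value obtained from the Lucky colouring $\zeta(G)$, and the $\mathcal{J}$-colouring being congruent to it forces $\zeta_{\mathcal{J}}(G) = \zeta(G)$ as well. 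A small subtlety: $\zeta_\varphi(G)$ is a maximum over \emph{all} chromatic colourings, so one must note that congruence of $\varphi_\chi(G)$ and $\varphi_{\mathcal{L}}(G)$ means the \emph{optimal} chromatic colouring is congruent to the Lucky one, which is exactly what delivers the equality $\zeta_\varphi(G)=\zeta(G)$.

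\textbf{Necessity.} Conversely, assume $\zeta_\varphi(G) = \zeta(G)= \zeta_{\mathcal{J}}(G)$. The key observation is that moving from a $\chi$-colouring to a Lucky colouring, and from a Lucky colouring to a $\mathcal{J}$-colouring, are the only mechanisms by which the completion count can strictly increase (via redistributing vertices among colour classes or introducing additional colour classes, as in the argument behind Proposition~\ref{pro3.2}). If the induced partition from the optimal $\chi$-colouring were \emph{not} congruent to that of the Lucky colouring, then by Theorem~\ref{thm2.5} (Lucky's Theorem) and Corollary~\ref{col2.3} the Lucky partition would admit strictly more completion edges, contradicting $\zeta_\varphi(G)=\zeta(G)$. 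Similarly, if the $\mathcal{J}$-colouring used strictly more colour classes than $\chi(G)$, or redistributed vertices into a genuinely different partition, the optimal near-completion $\ell$-partition principle would give $\zeta(G) < \zeta_{\mathcal{J}}(G)$, a contradiction. Hence $\mathcal{J}(G) = \chi(G)$ and all three partitions agree up to colour-class interchange, i.e. $\varphi_\chi(G) \cong \varphi_{\mathcal{L}}(G) \cong \varphi_{\mathcal{J}}(G)$.

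\textbf{Main obstacle.} I expect the delicate step to be the necessity direction, specifically arguing rigorously that equality of the completion counts forces congruence of the \emph{partitions} rather than merely equality of some numerical aggregate. Two non-congruent partitions could in principle yield the same count; ruling this out requires invoking the optimality structure — that the Lucky colouring is by construction the maximiser of the sum-product $\mathcal{L}$ and that any non-congruent chromatic partition sits strictly below it unless it too is a maximiser with the same part-size multiset, combined with the graph-structural fact (cf.\ Theorem~\ref{thm2.10}) that realising the count forces the parts to be positioned identically. Handling the case $\chi(G)=2$ separately via Corollary~\ref{col2.8} may streamline the write-up, since there the partition is automatically unique and the equalities are immediate.
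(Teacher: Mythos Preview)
Your proposal is correct and aligns with the paper's approach: sufficiency is treated as essentially trivial (congruent partitions yield identical completion-edge counts), and necessity is argued via the structural characterisation of when completion counts can increase. The paper's own proof is far terser than yours --- it dispatches sufficiency in one clause and for necessity simply asserts that the equalities force congruence ``from Theorem~\ref{thm2.10}'', with no further elaboration.

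The main difference is that you supplement the appeal to Theorem~\ref{thm2.10} with explicit use of Lucky's Theorem (Theorem~\ref{thm2.5}) and Corollary~\ref{col2.3} to argue that a non-congruent partition would force a \emph{strict} inequality in the chain $\zeta_\varphi(G) \leq \zeta(G) \leq \zeta_{\mathcal{J}}(G)$. This is a genuine addition: the paper's one-line invocation of Theorem~\ref{thm2.10} leaves implicit precisely the point you flag as the ``main obstacle'' --- namely, why equality of the numerical counts forces congruence of the underlying partitions rather than merely equality of some aggregate. Your version makes that step visible, whereas the paper's version assumes the reader will fill it in. Both routes arrive at the same conclusion; yours is the more complete write-up.
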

\begin{proof}
If $\varphi_\chi(G) \cong \varphi_{\mathcal{L}}(G) \cong \varphi_{\mathcal{J}}(G)$ it follows trivially that, $\zeta_\varphi(G) = \zeta(G)= \zeta_{\mathcal{J}}(G).$

If $\zeta_\varphi(G) = \zeta(G)= \zeta_{\mathcal{J}}(G)$ it follows from Theorem~\ref{thm2.10} that, $\varphi_\chi(G) \cong \varphi_{\mathcal{L}}(G) \cong \varphi_{\mathcal{J}}(G).$
\end{proof}
The next corollaries are direct consequences of Theorem~\ref{thm3.3}
\begin{corollary}
\label{col3.4}
For a graph $G$ equality holds in accordance with Theorem~\ref{thm2.13}  if and only if $G$ is $SCC$ in respect of a proper colouring.
\end{corollary}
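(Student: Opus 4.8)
The plan is to obtain Corollary~\ref{col3.4} directly from Theorem~\ref{thm3.3}, together with the definition of stability lifted from Lucky colourings to arbitrary proper colourings. First I would fix the reading of ``$SCC$ in respect of a proper colouring'': it should mean that every proper colouring of $G$ realising the relevant chromatic completion number (over all $\varphi_\chi$, over all Lucky colourings, or over all $\mathcal{J}$-colourings) induces the same partition of $V(G)$ into chromatic vertex subsets, up to a pairwise interchange of colour classes; equivalently, all such optimal colourings are congruent. With this reading, ``equality holds in accordance with Theorem~\ref{thm3.3}'' means $\zeta_\varphi(G)=\zeta(G)=\zeta_{\mathcal J}(G)$, and Theorem~\ref{thm3.3} already equates this with $\varphi_\chi(G)\cong\varphi_{\mathcal L}(G)\cong\varphi_{\mathcal J}(G)$. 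So the corollary is essentially a translation of Theorem~\ref{thm3.3} into the language of $SCC$, and the work is to check that the two formulations genuinely match.

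For the forward implication, suppose $\zeta_\varphi(G)=\zeta(G)=\zeta_{\mathcal J}(G)$. By Theorem~\ref{thm3.3} the three colouring conventions are congruent, so they use the same number of colour classes, forcing $\mathcal{J}(G)=\chi(G)$, and they determine one common vertex partition up to relabelling. In particular, under the Lucky colouring every vertex already yields a rainbow neighbourhood, so $r_{\mathcal L}(G)=\nu(G)$, and Theorem~\ref{thm2.11} (or directly Theorem~\ref{thm2.10}) gives that $G$ is $SCC$. Since the optimal $\chi$-colourings and $\mathcal{J}$-colourings are all congruent to the Lucky colouring, stability in fact holds with respect to any proper colouring, which is the claimed conclusion.

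For the converse, assume $G$ is $SCC$ in respect of a proper colouring. Then any $\chi$-colouring achieving $\zeta_\varphi(G)$, any Lucky colouring, and any $\mathcal{J}$-colouring induce the same partition of $V(G)$ up to interchange of colour classes, i.e.\ $\varphi_\chi(G)\cong\varphi_{\mathcal L}(G)\cong\varphi_{\mathcal J}(G)$. By Corollary~\ref{col2.3} each of $\zeta_\varphi(G)$, $\zeta(G)$, $\zeta_{\mathcal J}(G)$ equals $\varepsilon(H(\varphi))-\varepsilon(G)$ for the respective optimal $\varphi$; congruent colourings have isomorphic pseudo completion graphs and hence the same edge count, so the three numbers coincide (alternatively, quote Theorem~\ref{thm3.3} in this direction as well). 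The step I expect to be the main obstacle is the bookkeeping in the forward direction: one must verify carefully that congruence of a $\mathcal{J}$-colouring with a $\chi$-colouring forces $\mathcal{J}(G)=\chi(G)$, so that no proper colouring using strictly more colours can be optimal, and hence that ``$SCC$ in respect of a proper colouring'' does not secretly differ from $SCC$ in the Lucky sense on such graphs; the rest is a routine chain of equivalences using Proposition~\ref{pro3.2}, Corollary~\ref{col2.3} and Theorem~\ref{thm2.10}.
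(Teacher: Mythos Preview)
Your proposal is correct and matches the paper's intended route: the paper gives no explicit proof for Corollary~\ref{col3.4} but states it as a direct consequence of Theorem~\ref{thm3.3} (the reference to ``Theorem~2.13'' is a typo for Theorem~\ref{thm3.3}), and you derive both implications from that theorem together with Theorem~\ref{thm2.10}/\ref{thm2.11} and Corollary~\ref{col2.3}. Your careful unpacking of what ``$SCC$ in respect of a proper colouring'' should mean, and the observation that congruence forces $\mathcal{J}(G)=\chi(G)$, supply exactly the bookkeeping the paper leaves implicit.
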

\begin{corollary}
\label{col3.5}
$E_\varphi(G)~is~unique\Rightarrow E_{\mathcal{L}}(G)~is~unique\Rightarrow E_{\mathcal{J}}(G)~is~unique.$
\end{corollary}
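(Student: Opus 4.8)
The plan is to prove the two implications separately, using the observation that the chromatic completion edge set attached to a colouring depends only on how that colouring partitions $V(G)$ into colour classes --- equivalently, on the pseudo completion graph $H(\varphi)=K_{n_1,n_2,\dots}$ of Lemma~\ref{lem2.6} and Corollary~\ref{col2.3} --- and not on the names of the colours. Hence congruent colourings (those differing only by a pairwise interchange of colour classes) always determine the same completion edge set; saying that a family of completion edge sets ``is unique'' amounts to saying that all colourings in the corresponding family are pairwise congruent. I would record this remark first, since both implications rest on it.

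For the first implication, $E_\varphi(G)$ unique $\Rightarrow E_{\mathcal{L}}(G)$ unique: by definition a Lucky colouring $\varphi_{\mathcal{L}}(G)$ is a proper colouring of $G$ with $\chi(G)$ colours, so it is one of the chromatic colourings $\varphi_\chi(G)$ over which $E_\varphi(G)$ ranges. If the completion edge set is the same for every chromatic colouring, then in particular it is the same for every Lucky colouring, so $E_{\mathcal{L}}(G)$ is unique. (In fact, by the optimal near-completion $\ell$-partition description of Lucky colourings following Lemma~\ref{lem2.6}, every completion-optimal chromatic colouring is a Lucky colouring, so on the optimal colourings the two uniqueness statements coincide.)

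For the second implication, $E_{\mathcal{L}}(G)$ unique $\Rightarrow E_{\mathcal{J}}(G)$ unique: if $E_{\mathcal{L}}(G)$ is unique then by Theorem~\ref{thm2.7} the graph $G$ is $SCC$. By Corollary~\ref{col3.4} this forces equality $\zeta_\varphi(G)=\zeta(G)=\zeta_{\mathcal{J}}(G)$, whence Theorem~\ref{thm3.3} yields $\varphi_\chi(G)\cong\varphi_{\mathcal{L}}(G)\cong\varphi_{\mathcal{J}}(G)$; that is, every $\mathcal{J}$-colouring is congruent to every Lucky colouring. By the opening remark, all $\mathcal{J}$-colourings then determine one and the same completion edge set, namely $E_{\mathcal{L}}(G)$, so $E_{\mathcal{J}}(G)$ is unique.

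The main obstacle is the second implication, where one must pass from the property $SCC$ --- a statement purely about Lucky (and chromatic) colourings --- to control over $\mathcal{J}$-colourings, which in general use more than $\chi(G)$ colours; this is exactly what the chain Theorem~\ref{thm2.7} $\to$ Corollary~\ref{col3.4} $\to$ Theorem~\ref{thm3.3} supplies, provided the biconditionals are invoked in the direction ``$SCC \Rightarrow$ congruence''. One should also note that Section~\ref{s3} restricts attention to graphs admitting a $\mathcal{J}$-colouring, so $E_{\mathcal{J}}(G)$ is well defined throughout. The remaining points --- that congruent colourings yield equal completion edge sets and that uniqueness is transported along each implication --- are routine.
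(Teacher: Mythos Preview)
Your proposal is correct and aligns with the paper's intent: the paper gives no separate proof of Corollary~\ref{col3.5}, merely declaring it (together with Corollary~\ref{col3.4}) a direct consequence of Theorem~\ref{thm3.3}. Your argument makes this explicit, handling the first implication by the elementary inclusion of Lucky colourings among chromatic colourings, and the second by the chain Theorem~\ref{thm2.7} $\to$ Corollary~\ref{col3.4} $\to$ Theorem~\ref{thm3.3}, which is precisely the logical route the paper's organisation suggests.
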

An example is that, the alternating colouring $c(v_i)=c_1,~c(v_{i+1})=c_2,$ $i=1,2,3,\dots, n-1$ of the vertices of an even cycle graph, $C_n,$ $n\geq 4$ is firstly, a $\mathcal{J}$-colouring because it is a maximal proper colouring with each vertex yielding a rainbow neighbourhood in $C_n.$ It follows easily that the colouring is indeed a Lucky colouring followed by the implication that it is a chromatic colouring. Clearly, for all three colouring conventions the even cycle graph is $SCC$ as well as, $ E_\varphi(G) = E_{\mathcal{L}}(G)= E_{\mathcal{J}}(G).$ and unique to $C_n,$ $n\geq 4,$ $n$ even.

Since any graph $G$ is $k$-colourable for some $k\geq 1$ it is chromatic colourable. Therefore, it permits a Lucky colouring. However not all graphs permit a $\mathcal{J}$-colouring. This observation leads to the next result.
\begin{theorem}
\label{thm3.6}
A graph $G$ which does not permit a $\mathcal{J}$-coloring is not $SCC.$
\end{theorem}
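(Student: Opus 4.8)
My plan is to prove the contrapositive: I would show that if $G$ is $SCC$ then $G$ permits a $\mathcal{J}$-colouring. The whole argument then reduces to converting the structural consequence of $SCC$-ness supplied by Theorems~\ref{thm2.10} and~\ref{thm2.11} into the existence statement contained in Definition~\ref{def3.1}.

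First I would assume $G$ is $SCC$ (with $\zeta(G)>0$, as stipulated in this section). By Theorem~\ref{thm2.11}, a Lucky colouring $\varphi_{\mathcal{L}}(G)$ then satisfies $r_{\mathcal{L}}(G)=\nu(G)$; equivalently, by Theorem~\ref{thm2.10}, each chromatic vertex subset $X_i$ of $\varphi_{\mathcal{L}}(G)$ has the property that every $v\in X_i$ is adjacent to a vertex of $X_j$ for all $j\neq i$. Hence, under $\varphi_{\mathcal{L}}(G)$, the closed neighbourhood of every vertex contains a vertex of each of the $\chi(G)$ colour classes, i.e. every vertex yields a rainbow neighbourhood. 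Next I would observe that a Lucky colouring is in particular a proper colouring of $G$ (being a chromatic colouring / optimal near-completion $\ell$-partition), so $G$ admits \emph{some} proper colouring in which every vertex yields a rainbow neighbourhood. Finally, the set of positive integers $k$ for which $G$ has a proper $k$-colouring with every vertex yielding a rainbow neighbourhood is nonempty (it contains $\chi(G)$) and bounded above by $\nu(G)$, so it has a maximum; a proper colouring attaining this maximum is, by Definition~\ref{def3.1}, a $\mathcal{J}$-colouring of $G$. This contradicts the hypothesis that $G$ permits no $\mathcal{J}$-colouring, which proves the theorem.

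The step I expect to need the most care is the last one: justifying that ``$G$ has a proper colouring with all closed neighbourhoods rainbow'' really does yield a $\mathcal{J}$-colouring in the precise sense of Definition~\ref{def3.1}. If ``maximal proper colouring'' there is read as ``a proper colouring using the largest feasible number of colours subject to every vertex yielding a rainbow neighbourhood,'' the finiteness argument above closes the gap at once, since the Lucky colouring with $\chi(G)$ colours certifies feasibility even when $\mathcal{J}(G)>\chi(G)$ (recall $\mathcal{J}(G)\geq\chi(G)$ whenever a $\mathcal{J}$-colouring exists, since it is a proper colouring). If instead ``maximal'' is meant to encode some additional reduction-type property of the colouring, I would supplement the argument by starting from $\varphi_{\mathcal{L}}(G)$, which already meets the rainbow-neighbourhood requirement, and passing to a colour-count-maximising colouring in the same family; in either reading a colouring witnessing $\mathcal{J}(G)$ exists. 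In short, once Theorem~\ref{thm2.11} is invoked there is no genuine combinatorial obstacle; the only remaining work is bookkeeping about the exact definition of a $\mathcal{J}$-colouring.
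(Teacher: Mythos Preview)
Your proof is correct and is essentially the contrapositive of the paper's argument: the paper argues directly that if $G$ admits no $\mathcal{J}$-colouring then every proper (hence every Lucky) colouring has some vertex failing the rainbow-neighbourhood condition, whence $G$ is not $SCC$ by Theorem~\ref{thm2.10}. Your extra care about the word ``maximal'' in Definition~\ref{def3.1} is warranted but, as you observe, is settled by the finiteness argument; the paper leaves this step implicit.
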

\begin{proof}
A graph $G$ which does not permit a $\mathcal{J}$-colouring has for all proper colourings, including all Lucky colourings, a corresponding colour class vertex partition such that a vertex $v,$ $c(v)=c_i$ exists which is not adjacent to at least one vertex in each colour class $\mathcal{C}_j$, $j=1,2,3,\dots,i-1,i+1,\dots,\chi(G).$ Hence, by Theorem~\ref{thm2.10}, $G$ is not $SCC.$
\end{proof}
Put differently, Theorem~\ref{thm3.6} implies that the set of $SCC$ graphs is a subset of the set of $\mathcal{J}$-colourable graphs. As example, it follows that any graph $G$ which contains an odd cycle of length $n\geq 5$ and $n\equiv1~or~2~(mod~3)$ is not $SCC.$ See [??] for $\mathcal{J}$-colourable results on odd cycle graphs.
\section{Elementary Graph Operations}
\label{s4}
In this subsection the disjoint union and the join of graphs $G$ and $H$ are considered. In the disjoint union operation between graphs $G$ and $H$ the respective values, $\zeta(G)$ and $\zeta(H)$ remain the same if $\varphi_{\mathcal{L}}(G),$, $\varphi_{\mathcal{L}}(H)$ remain the same. The term $\sum\limits_{i=1}^{\chi(G)}\prod\limits_{j=1}^{\chi(H)}\theta_G(c_i)\theta_H(c_j)_{i\neq j}$ follows from the definition of chromatic completion of a graph. 
\begin{proposition}
\label{pro4.1}
For graphs $G$ and $H$ it follows that, $\zeta(G\cup H) \geq \zeta(G)+\zeta(H) + \sum\limits_{i=1}^{\chi(G)}\prod\limits_{j=1}^{\chi(H)}\theta_G(c_i)\theta_H(c_j)_{i\neq j}.$  
\end{proposition}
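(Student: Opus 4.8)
The plan is to prove the inequality constructively, by exhibiting one particular chromatic colouring of $G\cup H$ and counting a guaranteed lower bound on the number of completion edges it admits; appealing to Corollary~\ref{col2.3} then finishes the argument.

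First I would fix a common palette $\{c_1,c_2,\dots,c_k\}$ with $k=\chi(G\cup H)=\max\{\chi(G),\chi(H)\}$, and choose a Lucky colouring $\varphi_{\mathcal{L}}(G)$ of $G$ realising $\zeta(G)$ together with a Lucky colouring $\varphi_{\mathcal{L}}(H)$ of $H$ realising $\zeta(H)$, both drawing their colours from this palette. This is possible: each uses at most $k$ colours and colour names may be permuted freely, and since $G$ and $H$ share no vertices or edges no conflict arises. Their union is a proper colouring $\varphi$ of $G\cup H$ in which exactly $k$ colours occur, hence a chromatic colouring, and $\theta_{G\cup H}(c_i)=\theta_G(c_i)+\theta_H(c_i)$ for each $i$.

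Next I would identify three pairwise-disjoint families of non-edges of $G\cup H$ that can all be added simultaneously under $\varphi$ without creating a bad edge: (i) the $\zeta(G)$ chromatic completion edges afforded by $\varphi_{\mathcal{L}}(G)$, all lying inside $V(G)$; (ii) the $\zeta(H)$ chromatic completion edges afforded by $\varphi_{\mathcal{L}}(H)$, all lying inside $V(H)$; and (iii) every pair $uv$ with $u\in V(G)$, $v\in V(H)$ and $c(u)\neq c(v)$ --- no such pair is an edge of $G\cup H$ (the union is disjoint) and none is bad (the endpoints carry different colours). Families (i) and (ii) are disjoint since they lie in different components, and (iii) is disjoint from both since it consists of cross edges; moreover the three families do not interfere, because whether an added edge is bad depends only on the colours $\varphi$ assigns to its two endpoints. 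Counting family (iii) class-by-class gives exactly the term $\sum_{i=1}^{\chi(G)}\prod_{j=1}^{\chi(H)}\theta_G(c_i)\theta_H(c_j)_{i\neq j}$ in the statement, so adding all three families yields a chromatic completion graph for $\varphi$ with at least $\zeta(G)+\zeta(H)+\sum_{i=1}^{\chi(G)}\prod_{j=1}^{\chi(H)}\theta_G(c_i)\theta_H(c_j)_{i\neq j}$ new edges. Since $\zeta(G\cup H)$ is the maximum of this quantity over all chromatic colourings of $G\cup H$ (Corollary~\ref{col2.3}), the claimed inequality follows.

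The only delicate point --- and the reason the result is an inequality rather than an equality --- is that a global chromatic colouring of $G\cup H$ need not restrict to a Lucky colouring on each part, and the cross-term depends on how the large colour classes of $G$ and $H$ are aligned within the shared palette. A different alignment, or a non-Lucky split of either part, could add more cross edges while adding fewer inside $G$ or $H$, so the argument cannot be upgraded to equality without additional hypotheses. This is precisely the obstruction to a matching upper bound, but it plays no role in establishing the lower bound asserted here.
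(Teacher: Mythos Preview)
Your proof is correct and follows essentially the same approach as the paper: take Lucky colourings of $G$ and $H$ on a shared palette, combine them into a chromatic colouring of $G\cup H$, and count the three disjoint families of admissible completion edges (inside $G$, inside $H$, and across). Your write-up is in fact more explicit than the paper's --- you spell out why the combined colouring uses exactly $\chi(G\cup H)$ colours and invoke Corollary~\ref{col2.3} to pass from a particular colouring to the maximum --- whereas the paper's proof largely focuses on explaining why strict inequality can occur; but the underlying argument is the same.
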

\begin{proof}
For $\varphi_{\mathcal{L}}(G)$ and $\varphi_{\mathcal{L}}(H)$ it is possible (not necessarily) to find a pair of vertices $u,v \in V(G)$ such that $c(u)=c(v)$ in $G$ but $c(u)\neq c(v)$ in $G\cup H.$ Therefore, the edge $uv \in E_\chi(G\cup H)$ but $uv\notin E_\chi(G).$ Similarly in $H.$ It is also possible (not necessarily) to find a pair of vertices $u\in V(G),$ $v\in V(H)$ such that $c(u)=c(v)$ in $G$ and $H$ respectively, but $c(u)\neq c(v)$ in $G\cup H.$ Therefore, the edge $uv \in E_\chi(G\cup H).$ Hence, $\zeta(G\cup H) \geq \zeta(G)+\zeta(H) + \sum\limits_{i=1}^{\chi(G)}\prod\limits_{j=1}^{\chi(H)}\theta_G(c_i)\theta_H(c_j)_{i\neq j}.$
\end{proof}
The complexity to improve on the lower bound of Proposition 3.1 stem from the facts that firstly, $\chi(G\cup H) = max\{\chi(G), \chi(H)\}$ and secondly, that the value $\chi(G\cup H)$ must be applied to $\nu(G\cup H)=\nu(G)+\nu(H)$ vertices to find an appropriate optimal near-completion $\ell$-partition.

\begin{conjecture} 
\label{con4.2}
 If both graphs $G,$ $H$ permit a Lucky colouring as prescribed in accordance to  Theorem~\ref{thm2.5} and Lemma~\ref{lem2.6} then, $\zeta(G\cup H) = \zeta(G)+\zeta(H) + \sum\limits_{i=1}^{\chi(G)}\prod\limits_{j=1}^{\chi(H)}\theta_G(c_i)\theta_H(c_j)_{i\neq j}.$ 
\end{conjecture}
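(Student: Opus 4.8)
The plan is to establish the reverse inequality to Proposition~\ref{pro4.1}, namely $\zeta(G\cup H) \leq \zeta(G)+\zeta(H) + \sum_{i=1}^{\chi(G)}\prod_{j=1}^{\chi(H)}\theta_G(c_i)\theta_H(c_j)_{i\neq j}$, under the hypothesis that both $G$ and $H$ admit Lucky colourings of the explicit form in Lemma~\ref{lem2.6}. First I would invoke Corollary~\ref{col2.3}: $\zeta(G\cup H)$ equals the maximum of $\varepsilon(H(\varphi)) - \varepsilon(G\cup H)$ over all chromatic colourings $\varphi$ of $G\cup H$, where $H(\varphi)$ is the complete $\chi(G\cup H)$-partite pseudo-completion graph determined by the colour class sizes. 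Since $\chi(G\cup H) = \max\{\chi(G),\chi(H)\}$, write $k = \chi(G\cup H)$ and let $m_1,\dots,m_k$ be the colour class cardinalities across the whole union under an optimal chromatic colouring. The quantity to be maximised is $\sum_{1\le a<b\le k} m_a m_b - \varepsilon(G) - \varepsilon(H)$.

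Next I would split each global colour class by its intersection with $V(G)$ and $V(H)$: write $m_a = g_a + h_a$ where $g_a = \theta_G(c_a)$, $h_a = \theta_H(c_a)$ (allowing zeros, and noting $\sum g_a = \nu(G)$, $\sum h_a = \nu(H)$). Expanding $\sum_{a<b} m_a m_b = \sum_{a<b} g_a g_b + \sum_{a<b} h_a h_b + \sum_{a\neq b} g_a h_b + \text{(the }a=b\text{ cross terms }\sum_a g_a h_a\text{, which I would handle carefully)}$. Actually the clean bookkeeping is $\sum_{a<b}(g_a+h_a)(g_b+h_b) = \sum_{a<b} g_a g_b + \sum_{a<b} h_a h_b + \sum_{a\ne b} g_a h_b$; here the last sum runs over all ordered pairs with $a\ne b$ because each unordered pair $\{a,b\}$ contributes $g_a h_b + g_b h_a$. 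The first summand is at most $\zeta(G) + \varepsilon(G)$ because the restriction to $V(G)$ of the global colouring is a proper colouring of $G$ with at most $k$ colours, hence by Lemma~\ref{lem2.2} and Corollary~\ref{col2.3} it cannot exceed the pseudo-completion count, which is bounded by $\zeta(G)+\varepsilon(G)$; symmetrically for $H$. The cross term $\sum_{a\ne b} g_a h_b$ is exactly of the shape appearing in the statement. So after subtracting $\varepsilon(G)+\varepsilon(H)$ we get $\zeta(G\cup H) \le \zeta(G)+\zeta(H)+\sum_{a\ne b} g_a h_b$.

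The remaining step, and the one I expect to be the main obstacle, is to argue that the cross term $\sum_{a\ne b} g_a h_b$ is itself maximised precisely when the class sizes $(g_a)$ and $(h_a)$ coincide with the Lucky partitions of $G$ and $H$ computed independently — i.e.\ that one does not gain by deforming $G$'s colouring away from its own optimum in order to improve the cross term. This is where the hypothesis that both graphs admit the balanced colouring of Lemma~\ref{lem2.6} (rather than merely an optimal near-completion $\ell$-partition) is essential: with balanced partitions the quantities $\sum_{a<b} g_a g_b$ and $\sum_{a\ne b} g_a h_b$ are \emph{simultaneously} maximised, because both are Schur-concave-type functionals that prefer the most even distribution of a fixed total among a fixed number of parts. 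I would make this precise by a smoothing/exchange argument: given any chromatic colouring of $G\cup H$, repeatedly move a vertex from a larger $G$-class to a smaller $G$-class (when the graph structure permits, which it does under the Lemma~\ref{lem2.6} hypothesis) and check that neither $\sum_{a<b}g_ag_b + \zeta$-slack nor $\sum_{a\ne b}g_ah_b$ ever decreases; the same for $H$. Combining the three simultaneously-attained maxima gives equality with the Lucky-colouring value, and together with Proposition~\ref{pro4.1} this yields the claimed identity. I would close by remarking that the subtlety in the obstacle is exactly why the general (non-Lemma~\ref{lem2.6}) case is only conjectured: without balanced partitions, the optimum of the cross term and the optima of the internal terms can pull the colour class sizes in incompatible directions.
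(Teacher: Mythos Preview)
The statement you are attempting to prove is labeled \emph{Conjecture} in the paper and is left unproved there; the authors explicitly flag the difficulty immediately before it (the sentence about the complexity stemming from $\chi(G\cup H)=\max\{\chi(G),\chi(H)\}$ applied to $\nu(G)+\nu(H)$ vertices). So there is no proof in the paper to compare against, and your proposal is an attempt to settle an open problem, not to reproduce a known argument.

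Your outline contains a genuine error at the step you yourself call the main obstacle. You assert that the cross term $\sum_{a\neq b} g_a h_b$ is a ``Schur-concave-type functional that prefers the most even distribution'' and that a smoothing move on the $g$-side never decreases it. This is false. Writing $\sum_{a\neq b} g_a h_b = \nu(G)\nu(H) - \sum_a g_a h_a$, one sees that for fixed $(h_a)$ this is a \emph{linear} (hence not Schur-concave) function of $(g_a)$, and it is maximised by concentrating the $g$-mass on indices where $h_a$ is smallest, not by balancing. Concretely, transferring one vertex from $g_a$ to $g_b$ changes the cross term by $h_a - h_b$, which is negative whenever $h_a < h_b$; there is no reason the larger $g$-class should sit over the larger $h$-class. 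Thus the internal term $\sum_{a<b} g_a g_b$ and the cross term can pull in opposite directions even under the Lemma~\ref{lem2.6} hypothesis, and your simultaneous-maximum argument collapses.

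There is a second, smaller gap: your bound $\sum_{a<b} h_a h_b \le \zeta(H)+\varepsilon(H)$ invokes Corollary~\ref{col2.3}, but that corollary ranges only over \emph{chromatic} colourings of $H$. When $\chi(G)>\chi(H)$ the restriction to $V(H)$ of a chromatic colouring of $G\cup H$ may use strictly more than $\chi(H)$ colours, and then (cf.\ Proposition~\ref{pro3.2}) the pseudo-completion count on $H$ can exceed $\zeta(H)+\varepsilon(H)$. Your final remark also misreads the paper: the Lemma~\ref{lem2.6} case \emph{is} the conjecture, not a proven special case of a broader open statement.
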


The graph $(G-E(G))+(H-E(H))$ is a spanning subgraph of $G+H$ and chromatic completion does not result in additional edges $uv$, $u\in V(G)$, $v\in V(H).$ Since $\chi(G+H)=\chi(G)+\chi(H)$ with colours say, $\mathcal{C}=\{c_1,c_2,c_3,\dots,c_{\chi(G)}, c_{\chi(G)+1}, c_{\chi(G)+2}, c_{\chi(G)+3},\dots, c_{\chi(G)+\chi(H)}\},$ the values $\zeta(G)$ and $\zeta(H)$ remain the same in the join operation between graphs $G$ and $H.$ Hence, $\zeta(G+H) = \zeta(G)+\zeta(H)$ for all graphs. It thus follows that $\zeta(K_1+ H)= \zeta(H)$. Despite this trivial observation it is hard to find $\zeta(G\circ H)$ in general, where $G\circ H$ denote the corona graph.
\begin{corollary}
\label{col4.3}
$G$ and $H$ are $SCC$ if and only if $G + H$ is $SCC.$
\end{corollary}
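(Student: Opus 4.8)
The plan is to prove both directions by combining the characterisation of $SCC$ graphs in Theorem~\ref{thm2.10} with the fact, established in the preamble to this section, that $\chi(G+H)=\chi(G)+\chi(H)$ and that in any chromatic colouring of $G+H$ the colour classes used on $V(G)$ are disjoint from those used on $V(H)$. The key structural observation is that a Lucky colouring of $G+H$ restricts to a Lucky colouring of $G$ on $V(G)$ and to a Lucky colouring of $H$ on $V(H)$: indeed, since every vertex of $G$ is adjacent to every vertex of $H$ in $G+H$, no colour can be shared across the two parts, so an optimal near-completion $\ell$-partition of $V(G+H)$ splits as the union of an optimal near-completion partition of $V(G)$ (using $\chi(G)$ colours) and one of $V(H)$ (using $\chi(H)$ colours). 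This is where I would record that the chromatic vertex subsets $X_1,\dots,X_{\chi(G)+\chi(H)}$ of $G+H$ are precisely the chromatic vertex subsets of $G$ together with those of $H$.

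For the forward direction, assume $G$ and $H$ are both $SCC$. I would take any Lucky colouring of $G+H$ and, using the observation above, split its chromatic vertex subsets into those coming from $G$ and those coming from $H$. By Theorem~\ref{thm2.10} applied to $G$, every vertex $v\in V(G)$ lying in chromatic subset $X_i$ (with $i\le\chi(G)$) is adjacent within $G$ to a vertex of every other $G$-subset $X_j$, $j\le\chi(G)$, $j\neq i$; and since $v$ is adjacent in $G+H$ to every vertex of $H$, it is in particular adjacent to a vertex of each $H$-subset. Hence $v$ is adjacent to a vertex of every chromatic subset of $G+H$ other than its own; the symmetric argument handles $v\in V(H)$. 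By Theorem~\ref{thm2.10} in the other direction, $G+H$ is $SCC$.

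For the converse, assume $G+H$ is $SCC$ and suppose, for contradiction, that $G$ is not $SCC$ (the case of $H$ being symmetric). Then by Theorem~\ref{thm2.10} there is a Lucky colouring of $G$ and a vertex $v\in X_i\subseteq V(G)$ not adjacent in $G$ to any vertex of some $X_j\subseteq V(G)$, $j\neq i$. Extend this to a Lucky colouring of $G+H$ by attaching a Lucky colouring of $H$ on fresh colours — this is legitimate because the combined partition is again an optimal near-completion $\ell$-partition of $V(G+H)$, and $\zeta(G+H)=\zeta(G)+\zeta(H)$ is unaffected. In this Lucky colouring of $G+H$, the vertex $v$ is still not adjacent to any vertex of $X_j$: adjacencies of $v$ inside $V(G)$ are unchanged, and $X_j\subseteq V(G)$, so no edges to $H$ can help. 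This contradicts Theorem~\ref{thm2.10} for $G+H$, so $G$ must be $SCC$; likewise $H$.

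I expect the main obstacle to be the care needed in justifying that ``restricting'' and ``extending'' Lucky colourings across the join genuinely preserves the Lucky (optimal near-completion) property — one must argue that the maximisation of chromatic completion edges decouples between $V(G)$ and $V(H)$ because the cross-edges $uv$ with $u\in V(G)$, $v\in V(H)$ are already all present in $G+H$ and hence contribute nothing to $E_\chi$. Once that decoupling is written down cleanly, both implications are immediate from Theorem~\ref{thm2.10}, and the corollary also records that $E_\chi(G+H)=E_\chi(G)\cup E_\chi(H)$ is unique precisely when both $E_\chi(G)$ and $E_\chi(H)$ are, via Theorem~\ref{thm2.7}.
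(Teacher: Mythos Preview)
Your proposal is correct and follows essentially the same route as the paper: both directions are obtained from Theorem~\ref{thm2.10} together with the decoupling of colour classes across the join, using that all cross-edges between $V(G)$ and $V(H)$ are already present. You are more explicit than the paper in justifying that a Lucky colouring of $G+H$ restricts/extends to Lucky colourings of $G$ and $H$ separately, which the paper takes for granted.
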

\begin{proof}
If $G+H$ is $SCC$ then a vertex $v\in V(G)$, $c(v)=c_i$ is adjacent to at least one vertex $u$, $c(u)=c_j,$ $j= 1,2,3,\dots i-1,i+1,\dots, \chi(G)+\chi(H).$ Therefore, $v\in V(G),$ $c(v)=c_i$ is adjacent to at least one vertex $u \in V(G+H),$ $c(u)=c_j,$ $j= 1,2,3,\dots i-1,i+1,\dots, \chi(G)+\chi(H).$ Hence, $v\in V(G),$ $c(v)=c_i$ is adjacent to at least one vertex $u \in V(G),$ $c(u)=c_j,$ $j= 1,2,3,\dots i-1,i+1,\dots, \chi(G).$ Thus, by Theorem~\ref{thm2.10}, $G$ is $SCC.$ Similarly it follows that $H$ is $SCC.$

Conversely, if both $G$ and $H$ are $SCC$ the join operation implies that $uv\in E(G+H),$ $\forall$ distinct pairs $u\in V(G),$ $v\in V(H).$ Hence, $G+H$ is $SCC.$ Clearly, if say $G$ is not $SCC$ and $H$ is $SCC$ then, by Theorem~\ref{thm2.10}, $G+H$ is not $SCC$ because there exists at least one vertex $v\in V(G),$ $c(v)=c_i$ which is not adjacent to at least one vertex coloured $c_j,$ $1,2,3,\dots i-1,i+1,\dots, \chi(G).$
\end{proof}
Proposition~\ref{pro4.1} read together with $\zeta(G+H) = \zeta(G)+\zeta(H)$ implies that, $\zeta(G\cup H) \geq \zeta(G+H).$
\section{Conclusion}
Lucky's theorem read with Lemma 2.2 allows for the determination of an upper bound of $\zeta(G)$. Note that for two proper colorings say, $\varphi:V(G)\mapsto \mathcal{C}$, $|\mathcal{C}|=k$ and $\varphi':V(G)\mapsto \mathcal{C}'$, $|\mathcal{C}'| =k+t$, $1\leq t\leq n-k$, which are both allocated according to Lemma 2.6 or as a near-completion $k$-partition and a near-completion $(k+t)$-partion respectively, then $\varepsilon(G_{{\varphi}'}) \geq \varepsilon(G_\varphi)$, where $\varepsilon(G_{{\varphi}'})$, $\varepsilon(G_\varphi)$ denote the respective proper colouring completion graphs. The aforesaid leads to the following algorithm which provides an upper bound on $\zeta(G)$. For the purpose of the algorithm we utilise a deviation of the minimum parameter indexing of the vertices.
\subsection{Near-Lucky proper coloring of graph $G$}
Consider a graph $G$ of order $n\geq 1$ with vertices $v_i$, $i=0, 1,2,\dots, n-1$ such that, $d(v_0)\geq d(v_1)\geq d(v_2)\geq\cdots\geq d(v_{(n-1)})$. Let $\varphi:V(G)\mapsto \mathcal{C}$, $\mathcal{C} =\{c_0,c_1,c_2,\dots,c_{(\chi(G)-1)}\}$ be a chromatic colouring of $G$.\\\\
Step 1: Let $j=1$, $\mathcal{C}_j = \mathcal{C}$ and $i=j$. Go to step 2.\\
Step 2: Let $\mathcal{C}_i = \mathcal{C}_j$. Colour $v_i$, $c(v_i)=c_i~(mod~\chi(G))$ if possible, alternatively any permissible colour $c_t \in \mathcal{C}_i$ if possible. Else, colour with and additional colour $c_i'\notin \mathcal{C}_i$ and let $\mathcal{C}_j= \mathcal{C}_i\cup \{c'_i\}$. Go to step 3.\\
Step 3: Let $j=i+1$. If $j>n-1$, go to step 4. Else, let $i=j$. Go to step 2.\\
Step 4:  Let $\varphi':V(G)\mapsto \mathcal{C}_i$ and stop.\\\\
Clearly the algorithm is finite and results in $|\mathcal{C}_n|\geq |\mathcal{C}|$. Hence, it follows easily from Lucky's theorem read with Lemma 2.2 that, $\zeta(G)=\zeta_{\varphi}(G) \leq \zeta_{\varphi'}(G)$. It is easy to verify that the algorithm results in an exact Lucky colouring for all cycle graphs, $C_n$, $n\geq 3$ if the verices are consecutively labeled. Hence, the upper bound is best possible if a Lucky colouring results. Improving the efficiency of the algorithm remains open. The authors suggest that for graphs in general, coloring $v_i$ followed by colouring $N(v_i)$, $i\in \{0,1,2,\dots, n-1\}$ until all vertices have been coloured is a worthy avenue of research.

Improving the upper bound in Theorem 2.4 for graphs which are not complete remains open.

Theorem~\ref{thm2.10} characterizes graphs which are $SCC.$ Finding well-defined families of graphs which are $SCC$ remains open and is certainly worthy of further research. Furthermore, if a graph $G$ is $SCC$ it implies that for all Lucky colourings the corresponding chromatic completions graphs are isomorphic. However, it is possible to find graphs which are not $SCC$ but up to isomorphism, meaning that the vertices may require relabeling, the chromatic completion graphs remain equivalent. Finding such graphs remains open for research.

The difficulty seen with the disjoint union signals that advanced graph operations such as graphs products, graph powers, derivative corona's and others permit worthy research. In fact finding a Lucky colouring for a given graph $G$ in general remains an open problem. Characterizing graphs $G,$ $H$ such that $G\circ H$ is $SCC$ and doing same for other graph operations remain open problems.


\begin{thebibliography}{99}
\bibitem{1}
J.A. Bondy and  U.S.R. Murty, \textbf{Graph Theory with Applications,} Macmillan Press, London, (1976). 
\bibitem{2}
F. Harary, \textbf{Graph Theory}, Addison-Wesley, Reading MA, (1969).
\bibitem{3}
J. Kok, S. Naduvath and M.K. Jamil, \emph{Rainbow Neighbourhoods of Graphs}, arXiv: 1703.01089, Communicated.
\bibitem{4}
J. Kok and S. Naduvath, \emph{Rainbow Neighbourhood Equate Number of Graphs}, arXiv: 1709.00261, Communicated.
\bibitem{5}
J. Kok, S. Naduvath and O. Buelban, \emph{Reflection on Rainbow Neighbourhood Numbers}, arXiv: 1710.00383, Communicated.
\bibitem{6}
J. Kok and S. Naduvath, \textit{Johan colouring of graph operations}, arXiv: 1704.02869, Communicated. 
\bibitem{7}
J. Kok and S. Naduvath, \textit{A Note on $J$-colouring of Jahangir graphs}, arXiv: 1806.10731, Communicated.
\bibitem{8}
J. Kok and S. Naduvath, \textit{On $J$-colouring of Chithra graphs}, arXiv: 1808.08661, Communicated.
\bibitem{9}
E.G. Mphako-Banda, \emph{An Introduction to the $k$-Defect Polynomials}, Quaestiones Mathematicae, (2018), pp. 1-10.
\bibitem{10}
E.G. Mphako-Banda and J. Kok, \emph{Chromatic Completion Number}, arXiv: 1809.01136, Communicated.
\bibitem{11}
N.K. Sudev, \textit{On Certain Johan Colouring Parameters of Graphs}, arXiv:1612.04194, Communicated.
\bibitem{12}
C. Susanth, S.J. Kalayathankal, N.K. Sudev and J. Kok, \emph{A Note on the Rainbow Neighbourhood Number of Graphs}, Communicated.
\bibitem{13}
B. West, \textbf{Introduction to Graph Theory}, Prentice-Hall, Upper Saddle River, (1996).
\end{thebibliography}
\end{document}